\newtheorem{theorem}{Theorem}[section]
\newtheorem{lemma}[theorem]{Lemma}
\theoremstyle{remark}
\numberwithin{equation}{section}
\begin{document}

\title[Continuous time limit of EnKFs]{Derivation of Ensemble Kalman-Bucy Filters with unbounded nonlinear coefficients}
\author{Theresa Lange}
\address{Fakult\"at f\"ur Mathematik, Universit\"at Bielefeld, D-33501 Bielefeld, Germany}
\email{tlange@math.uni-bielefeld.de}

\date{November 26, 2021}

\begin{abstract}
We provide a rigorous derivation of the Ensemble Kalman-Bucy Filter as well as the Ensemble Transform Kalman-Bucy Filter in case of nonlinear, unbounded model and observation operators. We identify them as the continuous time limit of the discrete-time Ensemble Kalman Filter and the Ensemble Square Root Filters, respectively, together with concrete convergence rates in terms of the discretization step size. Simultaneously, we establish well-posedness as well as accuracy of both the continuous-time and the discrete-time filtering algorithms.
\end{abstract}

\keywords{Continuous time limit, Ensemble Kalman Bucy Filter, Ensemble Kalman Filter, Ensemble Square Root Filters}
\subjclass[2010]{60H35, 93E11, 60F99}

\maketitle 
\section{Introduction}
\noindent
Continuous-time stochastic filtering is concerned with identifying the conditional distribution of a continuous-time system of interest given noisy observations. Precisely, consider the following setting:
\begin{align}
{\rm d}X_t &= f\left(X_t\right){\rm d}t + Q^{\frac{1}{2}}{\rm d}W_t, \hspace{0.25cm} X_t \in \mathds{R}^{d},\label{X}\\
{\rm d}Y_t &= g\left(X_t\right){\rm d}t + C^{\frac{1}{2}}{\rm d}V_t, \hspace{0.25cm} Y_t \in \mathds{R}^{p},\label{Y}
\end{align}
where $Q \in \mathds{R}^{d \times d}$ and $C \in \mathds{R}^{p \times p}$ are symmetric positive definite matrices and $W$ and $V$ are independent Brownian motions. In this case, the conditional distribution
\[ \pi_t({\rm d}x) := \mathds{P}\left[X_t \in {\rm d}x | \mathcal{Y}_{0:t}\right]\]
where $\mathcal{Y}_{0:t} := \sigma\left(Y_s, s\leq t\right)$, is given by the Kushner-Stratonovich equation
\begin{equation*}
{\rm d}\pi_t(\varphi) = \pi_t(\mathcal{L}_t\varphi){\rm d}t + \left \langle \pi_t\left(\varphi g\right)- \pi_t(\varphi)\pi_t\left(g\right), C^{-1}\left({\rm d}Y_t - \pi_t(g){\rm d}t\right)\right \rangle
\end{equation*}
with $\mathcal{L}_t$ the infinitesimal generator of $X_t$, which in general might not be solvable analytically. In the linear case $f(x) = Ax, g(x) = Gx$, however, it is easy to see that if $\pi_0$ is Gaussian, also $\pi_t$ is a Gaussian distribution for all $t >0$ characterized by its mean
\[ m_t(l) := \int x_l \pi_t({\rm d}x)\]
and covariance matrix
\[ \Sigma_t(l_1, l_2) := \int  \left(x_{l_1} - m_t(l_1)\right)\left(x_{l_2} - m_t(l_2)\right)\bar{\pi}_t({\rm d}x)\]
whose evolution equations are given by the Kalman-Bucy filtering equations (cf. \cite{kalman1961})
\begin{align*}
{\rm d}m_t &= Am_t{\rm d}t + \Sigma_tG^TC^{-1}\left( {\rm d}Y_t - Gm_t{\rm d}t\right),\\
\frac{{\rm d}}{{\rm d}t} \Sigma_t &= A\Sigma_t + \Sigma_t A^T + Q - \Sigma_t G^TC^{-1}G\Sigma_t.
\end{align*}
Continuous-time ensemble-based Kalman-type filters form a generalization of the Kalman-Bucy filtering equations to the setting of nonlinear $f$ and $g$ in that they replace $m$ and $\Sigma$ by the first and second empirical moment of an ensemble. Observe that consequently these filters do not identify the full distribution $\pi_t$ and hence do not solve the optimal filtering problem but rather form an approximation scheme of $\pi_t$ up to second order moment. Popular examples are the Ensemble Kalman-Bucy Filter (EnKBF, e.g. \cite{bergemann2012}, \cite{kelly2014}) and the Ensemble Transform Kalman-Bucy Filter (ETKBF, e.g. \cite{amezcua2014}, \cite{bergemann2012}, \cite{deWiljes2018}) and recent advances in their mathematical analysis can be found for instance in \cite{deWiljes2018} in the full observations case, as well as in \cite{delMoral2018} for the linear setting as further summarized in \cite{abishop2020}.\\
\noindent
In our previous works \cite{lange2019} and \cite{lange2019b} we were able to rigorously derive both the EnKBF as well as the ETKBF as the limiting stochastic differential equations (SDE) of the discrete-time Ensemble Kalman Filter (EnKF, e.g. \cite{burgers1998}, \cite{evensen1994}), and the discrete-time Ensemble Square Root Filters (ESRF, e.g. \cite{tippett2003} and references therein) under rather restrictive assumptions on the coefficients: \cite{lange2019} covered the case of bounded $f$ and $g$, whereas in \cite{lange2019b} we considered the ESRF with linear observations and suitable deterministic perturbations chosen such that the resulting ensemble covariance matrices resembled the corresponding Kalman filtering equations. In a more general setting using linear observations, a formal derivation of the EnKBF and the ETKBF has been illustrated in \cite{bergemann2012} in finite dimensions, and \cite{kelly2014} set in Hilbert spaces. In this paper, we shall consider the case of nonlinear Lipschitz-continuous, unbounded $f$ and $g$ and provide a rigorous derivation of the EnKBF from the EnKF as well as the ETKBF from the ESRF algorithms.\\
Our analysis further necessitates certain bounds on the respective ensembles which resemble well-posedness and accuracy results. There exists theoretical and numerical evidence that the EnKF may experience blow-up known under the name of "catastrophic filter divergence" (see e.g. \cite{gottwald2013}, \cite{kelly2015}). Hence well-posedness and accuracy play an essential role in the analysis of the ensemble-based filters' performance and have therefore been of great interest in the literature: in the case of linear observations, consider for instance \cite{tong2015} and \cite{tong2016} providing uniform-in-time mean-squared estimates for both EnKF and ESRF using Lyapunov arguments, whereas in continuous time for stable signals imposing an observability criterion, the authors of \cite{delMoral2018} derive uniform-in-time bounds on higher-order moments of the EnKBF using Lyapunov techniques combined with methods from martingale and spectral theory, as well as concentration inequalities in \cite{delMoral2017} (also see the recent review \cite{abishop2020}). In the fully observed case with diagonal measurement covariance, the authors of \cite{kelly2014} show well-posedness of the EnKF as well as the EnKBF under certain assumptions on the dynamics of the underlying signal, and derive accuracy when employing variance inflation. In a similar setting with general Lipschitz-continuous $f$, well-posedness and accuracy of a variant of the ETKBF was shown in \cite{deWiljes2018}. Though we employ similar techniques as used in the articles presented above, the bounds needed for our analysis are still of different and more general type since they hold without further restrictions on the underlying filtering setting nor under further structural or algorithmic assumptions.\\
We present our arguments as follows: starting off with introducing the above continuous-time filtering algorithms in Section \ref{ContAlgo}, we show in Section \ref{ContWellPosed} boundedness of the continuous-time filters in ensemble-mean-squared sense yielding well-posedness results, as well as boundedness of the approximation error of the filters to the truth as formalized in Theorem \ref{AccCont}.\\
In Section \ref{DiscrAlgo}, we introduce the above discrete-time filtering algorithms which allow for a similar well-posedness and accuracy result as given in Theorem \ref{WellposedEnKFs} and in Theorem \ref{AccDiscr}, respectively. The final section is then concerned with the main part of this paper which is the rigorous proof of the above continuous time limit claim made precise in Theorem \ref{mainResult1} and Theorem \ref{mainResult2}.

\subsection{Notation}
We use the standard notation that for a vector $x \in \mathds{R}^{n}$ and a matrix $A \in \mathds{R}^{n \times m}$ both $x^T$ and $A^T$ denote the respective transpose, and $\|x\|$ a vector norm on $\mathds{R}^{n}$ as well as $\|A\|$ and $\|A\|_F$ the spectral norm and the Frobenius norm of $A$, respectively. We further introduce the notation $\lesssim$ when the estimation $\leq$ holds with a multiplicative constant, e.g. arising in the context of applying the Cauchy-Schwarz inequality. The Lipschitz constant of a Lipschitz continuous function $f$ is denoted by $\|f\|_{\text{Lip}}$. Furthermore denote
\[ |f|_{+} := \sup_{x \neq y} \frac{\langle x - y, f(x) - f(y) \rangle}{\|x - y\|^2}\]
where clearly $|f|_{+} \leq \|f\|_{\text{Lip}}$. One can show that further there exists a constant $C_f$ such that
\[ \langle x, f(x) \rangle \leq C_f\left(1+\|x\|^2\right)\]
and a constant $\tilde{C}_f$ such that
\[ \|f(x)\|^2\leq \tilde{C}_f\left(1+\|x\|^2\right).\]

\section{Continuous-time ensemble-based Kalman-type filtering algorithms}\label{ContAlgo}
\noindent
As described in the introduction, continuous-time ensemble-based Kalman-type filtering algorithms propose continuous-time evolution equations of an ensemble $X_t^{(i)}, 1 \leq i \leq M$, such that the ensemble mean
\begin{equation*}
\bar{x}_t := \frac{1}{M} \sum_{i=1}^M X_t^{(i)}
\end{equation*}
and the empirical covariance matrix
\begin{equation*}
P_t := \frac{1}{M-1}\sum_{i=1}^M \left(X_t^{(i)} - \bar{x}_t\right)\left(X_t^{(i)}-\bar{x}_t\right)^T
\end{equation*}
approximate the Kalman-Bucy filtering equations for mean and covariance matrix of the conditional distribution $\pi_t$ corresponding to the setting \eqref{X}-\eqref{Y}. The EnKBF specifies the ensemble via
\begin{equation}\label{EnKBF}
{\rm d}X_t^{(i)} = f\left(X_t^{(i)}\right){\rm d}t + Q^{\frac{1}{2}}{\rm d}W_t^{(i)} + K_t\left({\rm d}Y_t + C^{\frac{1}{2}}{\rm d}V_t^{(i)} - g\left(X_t^{(i)}\right){\rm d}t\right)
\end{equation}
where
\begin{equation}
K_t := \frac{1}{M-1}E_t\mathcal{G}_t^TC^{-1}
\end{equation}
is the so-called \emph{Kalman gain} with
\begin{align*}
E_t &:= \left[X_t^{(i)}-\bar{x}_t\right]_{i=1}^M,\\
\mathcal{G}_t &:= \left[g\left(X_t^{(i)}\right)-\bar{g}_t\right]_{i=1}^M, \quad \bar{g}_t := \frac{1}{M}\sum_{i=1}^M g\left(X_t^{(i)}\right),
\end{align*}
and $W^{(i)}$ and $V^{(i)}$ are $M$ independent Brownian motions. Ensemble mean and covariance matrix thus satisfy (up to explosion time) the following evolution equations:
\begin{equation*}
{\rm d}\bar{x}_t = \bar{f}_t{\rm d}t + Q^{\frac{1}{2}}{\rm d}\bar{w}_t + K_t\left({\rm d}Y_t + C^{\frac{1}{2}}{\rm d}\bar{v}_t - \bar{g}_t{\rm d}t\right)
\end{equation*}
and
\begin{equation*}
\begin{aligned}
P_t &= \left[\frac{1}{M-1}\sum_{i=1}^M \left(f\left(X_t^{(i)}\right)-\bar{f}_t\right)\left(X_t^{(i)}-\bar{x}_t\right)^T + \left(X_t^{(i)}-\bar{x}_t\right)\left(f\left(X_t^{(i)}\right)-\bar{f}_t\right)\right]{\rm d}t\\
&\quad + \left(Q + K_tCK_t^T -\frac{2}{M-1}K_t\mathcal{G}_tE_t^T\right){\rm d}t + {\rm d}\mathcal{M}^{(1)}_t
\end{aligned}
\end{equation*}
with the martingale
\begin{equation*}
\begin{aligned}
{\rm d}\mathcal{M}^{(1)}_t &:= \frac{1}{M-1}\sum_{i=1}^M \left(Q^{\frac{1}{2}}{\rm d}\left(W_t^{(i)}-\bar{w}_t\right)+K_tC^{\frac{1}{2}}{\rm d}\left(V_t^{(i)}-\bar{v}_t\right)\right)\left(X_t^{(i)}-\bar{x}_t\right)^T\\
&\hspace{2.5cm} + \left(X_t^{(i)}-\bar{x}_t\right)\left(Q^{\frac{1}{2}}{\rm d}\left(W_t^{(i)}-\bar{w}_t\right)+K_tC^{\frac{1}{2}}{\rm d}\left(V_t^{(i)}-\bar{v}_t\right)\right)^T.
\end{aligned}
\end{equation*}
\noindent
The ETKBF, on the other hand, consists of an ensemble of the following form:
\begin{equation}\label{ETKBF}
{\rm d}X_t^{(i)} = f\left(X_t^{(i)}\right){\rm d}t + Q^{\frac{1}{2}}{\rm d}W_t^{(i)} + K_t\left({\rm d}Y_t - \frac{1}{2}\left(g\left(X_t^{(i)}\right)+\bar{g}_t\right){\rm d}t\right)
\end{equation}
with (up to explosion time) ensemble mean
\begin{equation*}
{\rm d}\bar{x}_t = \bar{f}_t{\rm d}t + Q^{\frac{1}{2}}{\rm d}\bar{w}_t + K_t\left({\rm d}Y_t - \bar{g}_t{\rm d}t\right)
\end{equation*}
and covariance matrix
\begin{equation*}
\begin{aligned}
{\rm d}P_t &= \left[\frac{1}{M-1}\sum_{i=1}^M \left(f\left(X_t^{(i)}\right)-\bar{f}_t\right)\left(X_t^{(i)}-\bar{x}_t\right)^T + \left(X_t^{(i)}-\bar{x}_t\right)\left(f\left(X_t^{(i)}\right)-\bar{f}_t\right)\right]{\rm d}t\\
&\quad + \left(Q-\frac{1}{M-1}K_t\mathcal{G}_tE_t^T\right){\rm d}t + {\rm d}\mathcal{M}^{(2)}_t
\end{aligned}
\end{equation*}
with the martingale
\begin{equation*}
\begin{aligned}
{\rm d}\mathcal{M}^{(2)}_t &:= \frac{1}{M-1}\sum_{i=1}^M \left(Q^{\frac{1}{2}}{\rm d}\left(W_t^{(i)}-\bar{w}_t\right)\right)\left(X_t^{(i)}-\bar{x}_t\right)^T \\
&\hspace{2.5cm}+ \left(X_t^{(i)}-\bar{x}_t\right)\left(Q^{\frac{1}{2}}{\rm d}\left(W_t^{(i)}-\bar{w}_t\right)\right)^T.
\end{aligned}
\end{equation*}
For both \eqref{EnKBF} and \eqref{ETKBF} we do not immediately obtain existence of global (strong) solutions since the coefficients are only locally Lipschitz and of cubic growth in the ensemble variable. Nevertheless we are able to derive certain bounds which resemble well-posedness and accuracy results of both filters. These will be summarized in the next sections. In this context we will use that the observation process admits the following representation
\begin{equation}\label{YXref}
{\rm d}Y_t = g\left(X_t^{\text{ref}}\right){\rm d}t + C^{\frac{1}{2}}{\rm d}V_t
\end{equation}
where $X^{\text{ref}}$ denotes the reference trajectory of the signal that generates the observations and we assume that
\begin{equation*}
\sup_{t \in [0,T]} \mathds{E}\left[\left\|X_t^{\text{ref}}\right\|^2\right] < \infty.
\end{equation*}

\subsection{Well-posedness}\label{ContWellPosed}
As already discussed in \cite{langeStannat2020} in case of the ETKBF, for both continuous-time filtering algorithms we decompose the ensemble member into centered ensemble member
\begin{equation*}
\hat{X}_t^{(i)} := X_t^{(i)}-\bar{x}_t
\end{equation*}
and ensemble mean, and consider the trace of $P_t$ defined by
\begin{equation*}
\text{tr}(P_t) := \sum_{k=1}^d P_t(k,k) = \frac{1}{M-1}\sum_{i=1}^M \left\|\hat{X}_t^{(i)}\right\|^2
\end{equation*}
which up to explosion time $\xi$ of the filter satisfies the following inequality
\begin{equation}\label{ContTrace}
{\rm d}\text{tr}(P_t) \leq \left(2|f|_{+}\text{tr}(P_t)+\text{tr}(Q)\right){\rm d}t + {\rm d}\mathcal{N}_t
\end{equation}
where
\begin{equation*}
\begin{aligned}
{\rm d}\mathcal{N}_t &:= {\rm d}\text{tr}\left(\mathcal{M}_t^{(1)}\right) = \frac{2}{M-1}\sum_{i=1}^M \left \langle Q^{\frac{1}{2}}{\rm d}\left(W_t^{(i)}-\bar{w}_t\right)\right.\\
&\hspace{5cm}\left.+K_tC^{\frac{1}{2}}{\rm d}\left(V_t^{(i)}-\bar{v}_t\right),X_t^{(i)}-\bar{x}_t\right\rangle
\end{aligned}
\end{equation*}
in case of the EnKBF, and
\begin{equation*}
{\rm d}\mathcal{N}_t := {\rm d}\text{tr}\left(\mathcal{M}_t^{(2)}\right) = \frac{2}{M-1}\sum_{i=1}^M \left \langle Q^{\frac{1}{2}}{\rm d}\left(W_t^{(i)}-\bar{w}_t\right),X_t^{(i)}-\bar{x}_t\right \rangle
\end{equation*}
in case of the ETKBF. Thus by the stochastic Gronwall lemma (cf. \cite{scheutzow2013}) it holds
\begin{equation}\label{WellPosPCont}
\mathds{E}\left[\sup_{t \in [0,T \wedge \xi]}\sqrt{\text{tr}(P_t)}\right] \lesssim e^{\|f\|_{\text{Lip}}T}\text{tr}(Q).
\end{equation}
Similarly we deduce
\begin{equation}\label{WellPosMCont}
\mathds{E}\left[\sup_{t \in [0,T \wedge \xi]} \sqrt{\|\bar{x}_t\|}\right] < \infty
\end{equation}
which in total implies $\mathds{P}\left[\xi \leq T\right] = 0$ for all $T$ yielding well-posedness of the filters as well as existence of a global solution to \eqref{EnKBF}, resp. \eqref{ETKBF}.\\
A similar result which we will make use of is the following:
\begin{theorem}\label{WellposedEnKBF}
There exists a constant $\mathcal{C} = \mathcal{C}(T,n;f,g,C,Q,M)$ such that the EnKBF as well as the ETKBF satisfy
\begin{equation}\label{WellPosMemCont}
\mathds{E}\left[\sup_{t \in [0, T \wedge \tau_n]} \sum_{i=1}^M \left\|X_t^{(i)}\right\|^2\right] \leq \mathcal{C}
\end{equation}
where the stopping time
\begin{equation*}
\tau_n := \inf\left\{ t > 0: {\rm tr}(P_t) > n\right\}
\end{equation*}
satisfies
\begin{equation}
\limsup_{n \rightarrow \infty} \mathds{P}\left[\tau_n \leq T\right] = 0
\end{equation}
for every $T >0$.
\end{theorem}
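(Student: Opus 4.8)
The plan is to exploit that on the stochastic interval $[0,T\wedge\tau_n]$ the empirical covariance $P_t$, and hence the Kalman gain $K_t$, is bounded by a deterministic constant; this turns \eqref{EnKBF} and \eqref{ETKBF} into SDEs whose coefficients are globally controlled by the one-sided growth bounds $\langle x,f(x)\rangle\le C_f(1+\|x\|^2)$ and $\|g(x)\|^2\le\tilde{C}_g(1+\|x\|^2)$, after which a Gronwall argument yields \eqref{WellPosMemCont}.

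First I would bound the gain. From $\|E_t\|_F^2=(M-1)\text{tr}(P_t)$ and, by Lipschitz continuity of $g$,
\begin{equation*}
\|\mathcal{G}_t\|_F^2=\sum_{i=1}^M\|g(X_t^{(i)})-\bar g_t\|^2\le\sum_{i=1}^M\|g(X_t^{(i)})-g(\bar x_t)\|^2\le\|g\|_{\text{Lip}}^2\|E_t\|_F^2,
\end{equation*}
one gets $\|K_t\|\le\tfrac{1}{M-1}\|E_t\|_F\|\mathcal{G}_t\|_F\|C^{-1}\|\le\|g\|_{\text{Lip}}\|C^{-1}\|\,\text{tr}(P_t)$, so that $\|K_t\|\le\kappa_n:=\|g\|_{\text{Lip}}\|C^{-1}\|\,n$ for every $t\le\tau_n$, by the definition of $\tau_n$ and continuity of $t\mapsto\text{tr}(P_t)$.

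Next I would apply Itô's formula to $Z_t:=\sum_{i=1}^M\|X_t^{(i)}\|^2$, after introducing the auxiliary stopping time $\sigma_R:=\inf\{t>0:Z_t>R\}$ to keep all occurring quantities integrable. Using the representation \eqref{YXref} for ${\rm d}Y_t$, the growth bounds on $f$ and $g$, the gain bound $\|K_t\|\le\kappa_n$ (valid up to $\tau_n$) and Young's inequality, this should produce on $[0,T\wedge\tau_n\wedge\sigma_R]$ an estimate of the form
\begin{equation*}
{\rm d}Z_t\le\left(\alpha_n+\beta_nZ_t+\gamma_n\|X_t^{\text{ref}}\|^2\right){\rm d}t+{\rm d}\mathcal{M}_t,\qquad{\rm d}[\mathcal{M}]_t\le\delta_n(1+Z_t)\,{\rm d}t,
\end{equation*}
with constants $\alpha_n,\beta_n,\gamma_n,\delta_n$ depending only on $T,n,f,g,Q,C,M$. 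Taking the supremum over $[0,T\wedge\tau_n\wedge\sigma_R]$, applying the Burkholder--Davis--Gundy inequality to $\mathcal{M}$, absorbing the resulting $\tfrac12\mathds{E}[\sup_tZ_t]$ (finite owing to the cut-off at $\sigma_R$) into the left-hand side, invoking $\sup_{t\in[0,T]}\mathds{E}[\|X_t^{\text{ref}}\|^2]<\infty$ and Gronwall's lemma, I would obtain a bound for $\mathds{E}\big[\sup_{t\in[0,T\wedge\tau_n\wedge\sigma_R]}Z_t\big]$ uniform in $R$. Since $Z_t=(M-1)\text{tr}(P_t)+M\|\bar x_t\|^2$ (as $\sum_i(X_t^{(i)}-\bar x_t)=0$) is a.s.\ finite on $[0,T]$ by \eqref{WellPosPCont}, \eqref{WellPosMCont} and $\mathds{P}[\xi\le T]=0$, one has $\sigma_R\wedge T\to T$ a.s.\ as $R\to\infty$, so monotone convergence yields \eqref{WellPosMemCont}. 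The ETKBF \eqref{ETKBF} is treated identically---in fact with fewer terms, since its ensemble-mean equation lacks the $C^{1/2}{\rm d}\bar v_t$-increment.

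For the stopping-time assertion, $\mathds{P}[\xi\le T]=0$ implies that a.s.\ $\sup_{t\in[0,T]}\text{tr}(P_t)=\sup_{t\in[0,T\wedge\xi]}\text{tr}(P_t)$, whence $\{\tau_n\le T\}\subseteq\{\sup_{t\in[0,T\wedge\xi]}\sqrt{\text{tr}(P_t)}\ge\sqrt n\}$, and Chebyshev's inequality together with \eqref{WellPosPCont} gives
\begin{equation*}
\mathds{P}[\tau_n\le T]\le\frac{1}{\sqrt n}\,\mathds{E}\Big[\sup_{t\in[0,T\wedge\xi]}\sqrt{\text{tr}(P_t)}\Big]\lesssim\frac{e^{\|f\|_{\text{Lip}}T}\text{tr}(Q)}{\sqrt n}\longrightarrow0\qquad(n\to\infty).
\end{equation*}
The hard part is the third step: outside $[0,T\wedge\tau_n]$ the Kalman gain grows quadratically in $\text{tr}(P_t)$ while the drift is only locally Lipschitz with cubic growth in the ensemble, so the Gronwall closure genuinely relies on freezing $K_t$ at the deterministic level $\kappa_n$ via the cut-off at $\tau_n$; once this is done, the remaining estimates are routine dissipativity-plus-BDG manipulations.
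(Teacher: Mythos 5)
Your proposal is correct and follows essentially the same route as the paper: It\^o's formula applied to $\sum_{i=1}^M\|X_t^{(i)}\|^2$ using \eqref{YXref}, the Kalman gain frozen at level $\kappa_K n$ on $[0,T\wedge\tau_n]$ via Lemma \ref{estimatesKGain}, Burkholder--Davis--Gundy plus Gronwall for the moment bound, and the same Chebyshev argument on $\sqrt{{\rm tr}(P_t)}$ combined with \eqref{WellPosPCont} for the stopping-time claim. The only (harmless) divergence is in the coupling term $\langle X_t^{(i)},K_t(g(X_t^{\text{ref}})-g(X_t^{(i)}))\rangle$: the paper bounds it by Lipschitz continuity and controls the resulting $\sum_i\|X_t^{\text{ref}}-X_t^{(i)}\|^2$ through Theorem \ref{AccCont}, whereas you use the linear growth of $g$ together with the standing assumption $\sup_{t\in[0,T]}\mathds{E}[\|X_t^{\text{ref}}\|^2]<\infty$, which is equally valid and slightly more self-contained.
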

\noindent
For the proof see Appendix \ref{appWellposedEnKBF}. Also compare these results with the uniform-in-time moment estimates for the EnKBF in Section 7.2 in \cite{delMoral2017} following a similar strategy of proof. In view of our proceeding analysis, however, we need the specific estimates \eqref{WellPosPCont}-\eqref{WellPosMemCont}.

\subsection{Accuracy-type bounds}
Let $X^{\text{ref}}$ denote a reference trajectory given by
\begin{equation*}
{\rm d}X_t^{\text{ref}} = f\left(X_t^{\text{ref}}\right){\rm d}t + Q^{\frac{1}{2}}{\rm d}W_t.
\end{equation*}
Then we obtain the following result:
\begin{theorem}\label{AccCont}
Let $\left(X_t^{(i)}\right)_{t \geq 0}$, $1\leq i \leq M$, solve \eqref{EnKBF} or \eqref{ETKBF}. Then there exist constants $\mathcal{C}^{(1)} = \mathcal{C}^{(1)}(T,n;f,g,C,Q,M)$ and \\$\mathcal{C}^{(2)} = \mathcal{C}^{(2)}(T,n; f,g,C,Q,M)$ such that
\begin{equation}
\mathds{E}\left[\sup_{t \in [0, T\wedge \tau_n]} \sum_{i=1}^M \left\|X_t^{{\rm ref}} - X_t^{(i)}\right\|^2\right] \leq \mathcal{C}^{(1)}\mathds{E}\left[\sum_{i=1}^M\left\|X_0^{{\rm ref}}-X_0^{(i)}\right\|^2\right] + \mathcal{C}^{(2)}.
\end{equation}
\end{theorem}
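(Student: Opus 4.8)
The plan is to control the evolution of $D_t := \sum_{i=1}^M \|X_t^{\mathrm{ref}} - X_t^{(i)}\|^2$ up to the stopping time $\tau_n$ by an application of It\^o's formula, a one-sided Lipschitz estimate for $f$, and the stochastic Gronwall lemma. First I would write down the SDE satisfied by the difference $\Delta_t^{(i)} := X_t^{\mathrm{ref}} - X_t^{(i)}$. Using the representation \eqref{YXref} for ${\rm d}Y_t$, in the EnKBF case this reads
\begin{equation*}
{\rm d}\Delta_t^{(i)} = \left(f(X_t^{\mathrm{ref}}) - f(X_t^{(i)})\right){\rm d}t + Q^{\frac12}{\rm d}\left(W_t - W_t^{(i)}\right) - K_t\left(g(X_t^{\mathrm{ref}}) - g(X_t^{(i)})\right){\rm d}t - K_t C^{\frac12}{\rm d}V_t^{(i)},
\end{equation*}
with the analogous (simpler) expression in the ETKBF case where the $K_t$-term carries $\tfrac12(g(X_t^{(i)}) + \bar g_t)$ in place of $g(X_t^{(i)})$, so one also needs $g(X_t^{\mathrm{ref}}) - \tfrac12(g(X_t^{(i)}) + \bar g_t)$, which I would split as $\tfrac12(g(X_t^{\mathrm{ref}}) - g(X_t^{(i)})) + \tfrac12(g(X_t^{\mathrm{ref}}) - \bar g_t)$ and bound the second piece via the Lipschitz property of $g$ together with $\mathrm{tr}(P_t)$.

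Next I would apply It\^o's formula to $\|\Delta_t^{(i)}\|^2$ and sum over $i$. The drift contribution from $f$ is handled by $\langle \Delta_t^{(i)}, f(X_t^{\mathrm{ref}}) - f(X_t^{(i)})\rangle \leq |f|_+ \|\Delta_t^{(i)}\|^2$. The cross terms involving $K_t$ are estimated by Cauchy--Schwarz (hence the $\lesssim$) and Young's inequality, producing terms of the form $\|K_t\|\,\|g\|_{\mathrm{Lip}}\,\|\Delta_t^{(i)}\|^2$ and, in the ETKBF case, a remainder controlled by $\|K_t\|^2\,\mathrm{tr}(P_t)$. The quadratic-variation term from $Q^{\frac12}{\rm d}(W_t - W_t^{(i)})$ contributes a constant multiple of $\mathrm{tr}(Q)$, and the one from $K_t C^{\frac12}{\rm d}V_t^{(i)}$ contributes $\|K_t\|^2\,\mathrm{tr}(C)$. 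The key point is that on $[0, T\wedge\tau_n]$ we have $\mathrm{tr}(P_t) \leq n$, hence $\|E_t\| \lesssim \sqrt{n}$ and $\|\mathcal{G}_t\| \lesssim \sqrt{n}$ by the Lipschitz bound on $g$, so $\|K_t\| \leq \tfrac{1}{M-1}\|E_t\|\,\|\mathcal{G}_t\|\,\|C^{-1}\| \lesssim n$ is bounded by a constant depending on $n$. Therefore all the coefficients multiplying $\|\Delta_t^{(i)}\|^2$ are bounded by a constant $c(n; f,g,C)$, and the additive terms are bounded by a constant $c'(n; f,g,C,Q,M)$, uniformly on the stochastic interval, giving
\begin{equation*}
{\rm d}D_t \leq \left(c\, D_t + c'\right){\rm d}t + {\rm d}\mathcal{R}_t
\end{equation*}
on $[0,T\wedge\tau_n]$, with $\mathcal{R}_t$ a local martingale.

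Finally I would invoke the stochastic Gronwall lemma of \cite{scheutzow2013}, exactly as in the derivation of \eqref{WellPosPCont}, to pass from this differential inequality to the supremum bound
\begin{equation*}
\mathds{E}\left[\sup_{t \in [0,T\wedge\tau_n]} D_t\right] \lesssim e^{cT}\,\mathds{E}[D_0] + c'\,T\,e^{cT},
\end{equation*}
which is the claimed inequality with $\mathcal{C}^{(1)} = \mathcal{C}^{(1)}(T,n;f,g,C,Q,M)$ absorbing $e^{cT}$ and $\mathcal{C}^{(2)} = \mathcal{C}^{(2)}(T,n;f,g,C,Q,M)$ absorbing $c'Te^{cT}$; the already-established estimate \eqref{WellPosMemCont} guarantees the expectations appearing are finite so the Gronwall step is legitimate. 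The main obstacle is the bookkeeping around the Kalman gain: one must be careful that the only place local Lipschitz/cubic growth of the coefficients enters is through $\|K_t\|$, and that stopping at $\tau_n$ genuinely tames it — everything else is a routine It\^o/Young/Gronwall computation, but the ETKBF's asymmetric $\tfrac12(g(X^{(i)}) + \bar g)$ term requires the extra splitting noted above to avoid a spurious uncontrolled term.
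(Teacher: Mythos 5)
Your setup follows the paper's own route: write the SDE for $\Delta_t^{(i)}=X_t^{\rm ref}-X_t^{(i)}$ using \eqref{YXref}, apply It\^o's formula to the squared norm, absorb the drift via $|f|_{+}$ and the Lipschitz bound on $g$, and exploit that on $[0,T\wedge\tau_n]$ the Kalman gain satisfies $\|K_t\|\leq \kappa_K\,{\rm tr}(P_t)\leq \kappa_K n$ (this is exactly Lemma \ref{estimatesKGain}; your direct bound via $\|E_t\|\,\|\mathcal G_t\|$ also works). Your splitting of the ETKBF innovation term is likewise in line with the paper. Two minor slips: in the EnKBF case the difference SDE should also carry the common noise $-K_tC^{1/2}{\rm d}V_t$ coming from ${\rm d}Y_t$ (this only changes constants), and invoking \eqref{WellPosMemCont} for integrability is circular, since the paper proves Theorem \ref{WellposedEnKBF} \emph{using} Theorem \ref{AccCont}; the localization at $\tau_n$ already suffices.

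The genuine gap is the last step. The stochastic Gronwall lemma of \cite{scheutzow2013}, applied ``exactly as in the derivation of \eqref{WellPosPCont}'', does not yield $\mathds{E}\bigl[\sup_{t}D_t\bigr]$: it only controls $\mathds{E}\bigl[(\sup_t D_t)^{p}\bigr]$ for $p\in(0,1)$ — which is precisely why \eqref{WellPosPCont} carries a square root. So your concluding display claims a first-moment supremum bound that the cited lemma does not deliver. The reason the stronger conclusion is nevertheless true here (and the reason the paper does not use the stochastic Gronwall lemma for this theorem) is that, unlike in the derivation of \eqref{WellPosPCont}, the process is stopped at $\tau_n$, so the quadratic variation of the martingale part $\mathcal R_t$ is itself bounded by $B(n)\,D_t$ with $B(n)$ depending only on $n$, $\|Q\|$, $\|C\|$, $\kappa_K$, $M$. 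One then runs a deterministic Gronwall step first, applies the Burkholder--Davis--Gundy inequality to $\sup_t\int_0^t e^{\alpha_n(t-s)}{\rm d}\mathcal R_s$, uses $\sqrt{x}\leq\frac12(1+x)$ to linearize the resulting square root of the quadratic variation, and closes with a second Gronwall argument; this is the paper's argument leading to \eqref{RefContDetail}. With your final step replaced by this BDG-plus-second-Gronwall iteration, the proof goes through.
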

\noindent
For the proof see Appendix \ref{appAccCont}.

\section{Discrete-time ensemble-based Kalman-type filtering algorithms}\label{DiscrAlgo}
\noindent
Solving the filtering problem from the last section in the applications necessitates a suitable discretization of the underlying setting \eqref{X}-\eqref{Y}. In our context we choose the corresponding Euler-Maruyama time-discretizations
\begin{align*}
X_{t_k}^{h} &= X_{t_{k-1}}^{h} + hf\left(X_{t_{k-1}}^{h}\right) + Q^{\frac{1}{2}}W_k^{h},\\
Y_{t_k}^{h} &= Y_{t_{k-1}}^{h} + hg\left(X_{t_{k-1}}^{h}\right) + C^{\frac{1}{2}}V_k^{h}
\end{align*}
on the equidistant partition $0 = t_0 < t_1 < ... < t_L = T$, $t_k = t_{k-1} + h$, of $[0,T]$ for $T>0$ fixed where $W_k^{h} \overset{d}{=} W_{t_k}-W_{t_{k-1}} \sim \mathcal{N}(0,h{\rm Id})$ and $V_k^{h} \overset{d}{=} V_{t_k}-V_{t_{k-1}} \sim \mathcal{N}(0,h{\rm Id})$. The corresponding discrete-time filtering problem is to identify the conditional distribution 
\[\pi_k(x) := \mathds{P}\left[X_{t_k}^{h} = x | \mathcal{Y}_{0:k}\right]\]
where in our case $\mathcal{Y}_{0:k} := \sigma\left(\Delta Y_1, ..., \Delta Y_k\right), \Delta Y_j := Y_{t_j} - Y_{t_{j-1}}, \Delta Y_0 = 0$, whose computation in the linear, Gaussian case reduces to the well-known Kalman filtering equations (cf. \cite{kalman1960}) for its mean and covariance matrix. Note that in $\Delta Y_j$  we use the observations coming from the measurement device as opposed to the measurements prescribed by the discretized model since the former are the actual data that are used in the applications.\\
In the proceeding analysis we will make use of the following notation: for $t \in [t_{k-1}, t_k)$ denote
\[\eta(t) := t_{k-1}, \eta_{+}(t):= t_k, \nu(t) := k-1, \nu_{+}(t) := k.\]
\subsection{Algorithms}\label{algos}
Discrete-time ensemble-based Kalman-type filtering algorithms iterate an ensemble of $M$ particles $X_k^{(i),a}, 1 \leq i \leq M,$ in such a way that its first and second empirical moments
\begin{align*}
\bar{x}_k^{a} &:= \frac{1}{M}\sum_{i=1}^M X_k^{(i),a},\\
P_k^{a} &:= \frac{1}{M-1}\sum_{i=1}^M \left(X_k^{(i),a} - \bar{x}_k^{a}\right)\left(X_k^{(i),a} - \bar{x}_k^{a}\right)^T
\end{align*}
form approximations of the Kalman filtering equations for first and second moment of the conditional distribution $\pi_k$. They iterate the ensemble in two steps: in a \emph{forecast step}, the ensemble of a previous estimation cycle $X_{k-1}^{(i),a}, 1\leq i \leq M$, is propagated forward according to the model equations forming the \emph{forecast ensemble} given by
\begin{equation*}
X_k^{(i),f} = X_{k-1}^{(i),a} + hf\left(X_{k-1}^{(i),a}\right) + Q^{\frac{1}{2}}W_k^{(i)}
\end{equation*}
where $W_k^{(i)} \sim \mathcal{N}(0, h{\rm Id})$ form $M$ independent samples of $W_k^{h}$. On the arrival of an observation $\Delta Y_k$ at time $t_k$, each of these forecasts will be updated to yield an \emph{analyzed ensemble} $\left(X_k^{(i),a}\right)$: let
\begin{equation*}
K_k := \frac{1}{M-1}E_k^{f}\left(\mathcal{G}_k^{f}\right)^T\left(C+ \frac{h}{M-1}\mathcal{G}_k^{f}\left(\mathcal{G}_k^{f}\right)^T\right)^{-1}
\end{equation*}
denote the so-called \emph{Kalman gain} where
\begin{align*}
E_k^{f} &:= \left[X_k^{(i),f} - \bar{x}_k^{f}\right]_{i=1}^M \in \mathds{R}^{d \times M},\\
\mathcal{G}_k^{f} &:= \left[g\left(X_k^{(i),f}\right) - \bar{g}_k^{f}\right]_{i=1}^M \in \mathds{R}^{p \times M}, \quad \bar{g}_k^{f} := \frac{1}{M}\sum_{i=1}^M g\left(X_k^{(i),f}\right),
\end{align*}
then this paper is concerned with the following filtering algorithms:
\begin{itemize}
\item the Ensemble Kalman Filter (short: EnKF)
\begin{equation*}
X_k^{(i),a} = X_k^{(i),f} + K_k\left(\Delta Y_k + C^{\frac{1}{2}}V_k^{(i)} - hg\left(X_k^{(i),f}\right)\right)
\end{equation*}
where $V_k^{(i)} \sim \mathcal{N}(0, h{\rm Id})$ form $M$ independent samples of $V_k^{h}$
\item Ensemble Square Root Filters (short: ESRF)
\begin{equation*}
X_k^{(i),a} = \bar{x}_k^{a} + E_k^{a}e_i
\end{equation*}
where
\begin{equation*}
\bar{x}_k^{a} = \bar{x}_k^{f} + K_k\left(\Delta Y_k - h\bar{g}_k^{f}\right)
\end{equation*}
and $E_k^{a}$ is a transformation of $E_k^{f}$ such that
\begin{equation*}
P_k^{a} := \frac{1}{M-1}E_k^{a}\left(E_k^{a}\right)^T \overset{!}{=} P_k^{f} - \frac{1}{M-1}K_k\mathcal{G}_k^{f}\left(E_k^{f}\right)^T
\end{equation*}
and $e_i$ denotes the $i$-th standard normal basis vector in $\mathds{R}^M$.
\end{itemize}
For the class of ESRF, we focus on the three most popular algorithms summarized in \cite{tippett2003} in the case of linear observations: the Ensemble Adjustment Kalman Filter (EAKF, cf. \cite{anderson2001})
\begin{equation*}
E_k^{a} = A_kE_k^{f},
\end{equation*}
the Ensemble Transform Kalman Filter (ETKF, cf. \cite{bishop2001})
\begin{equation*}
E_k^{a} = E_k^{f}T_k,
\end{equation*}
with transformations $A_k$ and $T_k$ specified below, and the unperturbed filter by \cite{whitaker2002}
\begin{equation}\label{WH}
E_k^{a} = \left({\rm Id} - \tilde{K}_kG\right)E_k^{f}
\end{equation}
where
\begin{equation*}
\tilde{K}_k := P_k^{f}G^T\left(C+hGP_k^{f}G^T\right)^{-\frac{1}{2}}\left(C^{\frac{1}{2}}+\left(C+hGP_k^{f}G^T\right)^{\frac{1}{2}}\right)^{-1}.
\end{equation*}
As we have shown in our recent paper \cite{langeStannat2020}, the transformations $A_k$ and $T_k$ allow for the following analytic representations: using the integral representation
\begin{equation*}
\sqrt{P^{-1}} = \frac{1}{\sqrt{\pi}}\int_0^{\infty} \frac{1}{\sqrt{t}}e^{-tP}{\rm d}t
\end{equation*}
for symmetric positive semidefinite matrix $P$, we obtain
\begin{align*}
A_k &= \sqrt{P_k^{f}}\left({\rm Id} + h\sqrt{P_k^{f}}G^TC^{-1}G\sqrt{P_k^{f}}\right)^{-\frac{1}{2}}\sqrt{P_k^{f}}^{-1},\\
T_k &= \left({\rm Id} + \frac{h}{M-1} \left(E_k^{f}\right)^TG^TC^{-1}GE_k^{f}\right)^{-\frac{1}{2}}
\end{align*}
where $\sqrt{P_k^{f}}$ denotes the symmetric positive semidefinite square root of $P_k^{f}$ and $\sqrt{P_k^{f}}^{-1}$ its pseudo inverse. Observe that these transformations are adjoint in the sense that
\begin{equation*}
A_kE_k^{f} = \frac{1}{\sqrt{\pi}}\int_0^{\infty} \frac{e^{-t}}{\sqrt{t}}e^{-thP_k^{f}G^TC^{-1}G}{\rm d}tE_k^{f} = E_k^{f}T_k
\end{equation*}
thus analytically we will not distinguish between the two filters.\\
In the setting of nonlinear observations, the above three ESRF transformations generalize to the following transformations
\begin{equation*}
E_k^{a} = E_k^{f} - \frac{h}{2(M-1)}E_k^{f}\left(\mathcal{G}_k^{f}\right)^TC^{-1}\mathcal{G}_k^{f} + R_k^{h}
\end{equation*}
for EAKF/ETKF where
\begin{equation}\label{Rkh}
R_k^{h} := \frac{1}{\sqrt{\pi}}\int_0^{\infty} \frac{e^{-t}}{\sqrt{t}}E_k^{f}\left(e^{-\frac{th}{M-1}\left(\mathcal{G}_k^{f}\right)^TC^{-1}\mathcal{G}_k^{f}} - {\rm Id} + \frac{th}{M-1}\left(\mathcal{G}_k^{f}\right)^TC^{-1}\mathcal{G}_k^{f}\right){\rm d}t,
\end{equation}
as well as to the transformation \eqref{WH} in case of the unperturbed filter where this time
\begin{equation*}
\begin{aligned}
\tilde{K}_k &:= \frac{1}{M-1}E_k^{f}\left(\mathcal{G}_k^{f}\right)^T\\
&\quad\times\left(C+\frac{h}{M-1}\mathcal{G}_k^{f}\left(\mathcal{G}_k^{f}\right)^T\right)^{-\frac{1}{2}}\left(C^{\frac{1}{2}}+\left(C+\frac{h}{M-1}\mathcal{G}_k^{f}\left(\mathcal{G}_k^{f}\right)^T\right)^{\frac{1}{2}}\right)^{-1}.
\end{aligned}
\end{equation*}
In summary: the filtering algorithms considered in this paper are the EnKF
\begin{align}
X_k^{(i),f} &= X_{k-1}^{(i),a} +hf\left(X_{k-1}^{(i),a}\right) + Q^{\frac{1}{2}}W_k^{(i)},\label{EnKFF}\\
X_k^{(i),a} &= X_k^{(i),f} K_k\left(\Delta Y_k + C^{\frac{1}{2}}V_k^{(i)} - hg\left(X_k^{(i),f}\right)\right)\label{EnKFA}
\end{align}
as well as the above ESRF algorithms
\begin{align}
X_k^{(i),f} &= X_{k-1}^{(i),a} +hf\left(X_{k-1}^{(i),a}\right) + Q^{\frac{1}{2}}W_k^{(i)},\label{ESRFF}\\
X_k^{(i),a} &= X_k^{(i),f} -h\hat{K}_kg\left(X_k^{(i),f}\right) - h\left(K_k - \hat{K}_k\right)\bar{g}_k^{f} + K_k\Delta  Y_k + \mathcal{R}_k^{(i)}\label{ESRFA}
\end{align}
where
\begin{itemize}
\item for EAKF/ETKF
\begin{align*}
\hat{K}_k &:= \frac{1}{2(M-1)}E_k^{f}\left(\mathcal{G}_k^{f}\right)^TC^{-1},\\
\mathcal{R}_k^{(i)} &:= R_k^{h}e_i
\end{align*}
\item for the unperturbed filter
\begin{align*}
\hat{K}_k &:= \tilde{K}_k,\\
\mathcal{R}_k^{(i)} &= 0.
\end{align*}
\end{itemize}

\subsection{Well-posedness}
\begin{theorem}\label{WellposedEnKFs}
The stopping time
\begin{equation*}
\tau_n^{h} := \inf\left\{t >0: {\rm tr}\left(P_{\nu(t)}^{f}\right)>n\right\},
\end{equation*}
satisfies
\begin{equation}\label{ConvBarTau}
\limsup_{n\rightarrow \infty}\limsup_{h \rightarrow 0}\mathds{P}\left[\tau_n^{h} \leq T\right] = 0
\end{equation}
for every $T >0$.
Furthermore there exists a constant \\$\mathcal{D}=\mathcal{D}(T,n;f,g,C,Q,M)$ such that the discrete-time filtering algorithms presented in Section \ref{algos} satisfy
\begin{equation}\label{WellPosMemDisc}
\mathds{E}\left[\int_0^{T\wedge \tau_n^{h}} \sum_{i=1}^M \left\|X_{\nu(s)}^{(i),a}\right\|^2{\rm d}s\right] \leq \mathcal{D}.
\end{equation}
\end{theorem}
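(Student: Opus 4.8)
The plan is to reduce both assertions to a single scalar recursion for $\text{tr}(P_k^f)$, mirroring the decomposition used for the continuous-time filters in Section~\ref{ContWellPosed} (and in \cite{langeStannat2020}). Writing $X_k^{(i),\bullet}=\bar x_k^{\bullet}+\hat X_k^{(i),\bullet}$ for $\bullet\in\{f,a\}$ and expanding $\|\hat X_k^{(i),f}\|^2$ along the forecast step \eqref{EnKFF}/\eqref{ESRFF}, one sums over $i$ and uses the identity $\sum_i\langle\hat X^{(i)},f(X^{(i)})-\bar f\rangle=\tfrac1{2M}\sum_{i,j}\langle X^{(i)}-X^{(j)},f(X^{(i)})-f(X^{(j)})\rangle$ together with $|f|_{+}$ and $\|f\|_{\text{Lip}}$ to obtain
\[
\text{tr}(P_k^f)\;\le\;\bigl(1+2h|f|_{+}+h^2\|f\|_{\text{Lip}}^2\bigr)\,\text{tr}(P_{k-1}^a)+h\,\text{tr}(Q)+\Delta\mathcal N_k^f ,
\]
where $\Delta\mathcal N_k^f$ collects the centred Brownian cross terms and the deviation of $\tfrac1{M-1}\sum_i\|Q^{1/2}(W_k^{(i)}-\bar w_k)\|^2$ from $h\,\text{tr}(Q)$, and is centred given the forecast $\sigma$-field $\mathcal F_{k-1}^a$. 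The essential new input is the analysis-step \emph{contraction} $\E[\text{tr}(P_k^a)\mid\mathcal F_k^f]\le\text{tr}(P_k^f)$: computing $\E[P_k^a\mid\mathcal F_k^f]$ for the EnKF \eqref{EnKFA} produces the corrector $-\tfrac{2h}{M-1}K_k\mathcal G_k^f(E_k^f)^T+\tfrac{h^2}{M-1}K_k\mathcal G_k^f(\mathcal G_k^f)^TK_k^T+hK_kCK_k^T$; writing all three via the push-through identity as $\tfrac{h}{(M-1)^2}\,\text{tr}\bigl((E_k^f)^TE_k^f\,\varphi(D)\bigr)$ with $D=(\mathcal G_k^f)^TC^{-1}\mathcal G_k^f$ collapses the scalar multiplier to $\varphi(\lambda)=-\lambda\bigl((M-1)(1+\tfrac h{M-1}\lambda)\bigr)^{-1}\le0$, so the trace of the corrector is $\le0$; for the ESRF the exact identity $P_k^a=P_k^f-\tfrac h{M-1}K_k\mathcal G_k^f(E_k^f)^T$ holds pathwise with the same nonnegative corrector trace. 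Combining the two steps yields, for a suitable $c=c(f)>0$ with $\rho_h:=1+hc\ge1+2h|f|_{+}+h^2\|f\|_{\text{Lip}}^2$,
\[
\text{tr}(P_k^f)\;\le\;\rho_h\,\text{tr}(P_{k-1}^f)+h\,\text{tr}(Q)+\Delta\mathcal M_k ,
\]
with $\{\Delta\mathcal M_k\}$ a martingale-difference sequence in the half-step filtration interleaving the forecast and analysis $\sigma$-fields.

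For \eqref{ConvBarTau} I would localise at $\kappa_n:=\min\{k:\text{tr}(P_k^f)>n\}$, since on $\{k<\kappa_n\}$ the ensemble spread and the gain $\|K_k\|\le\|g\|_{\text{Lip}}\|C^{-1}\|\,\text{tr}(P_k^f)\le\|g\|_{\text{Lip}}\|C^{-1}\|n$ are controlled, making all increments of the stopped recursion integrable with moments polynomial in $n$. Deflating $\widehat Z_k:=\rho_h^{-k}\,\text{tr}(P_{k\wedge\kappa_n}^f)$ and adding the residual deterministic drift, $Y_k:=\widehat Z_k+\sum_{j\ge k}\rho_h^{-(j+1)}h\,\text{tr}(Q)$, produces a \emph{nonnegative supermartingale} with $\E[Y_0]=\E[\text{tr}(P_0)]+\text{tr}(Q)/c$ — independent of $n$ and $h$. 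The maximal inequality $\lambda\,\P[\sup_kY_k\ge\lambda]\le\E[Y_0]$ gives $\E[\sup_k\sqrt{Y_k}]\le2\sqrt{\E[Y_0]}$, and since $\text{tr}(P_{k\wedge\kappa_n}^f)=\rho_h^{k}\widehat Z_k\le\rho_h^{L}Y_k\le\Gamma_T Y_k$ for $k\le L$ with $\Gamma_T:=e^{cT}$ ($h$-independent), we get $\E[\max_{k\le L}\sqrt{\text{tr}(P_{k\wedge\kappa_n}^f)}]\le\mathcal K$ with $\mathcal K$ independent of $h$ and $n$. Since $\{\tau_n^h\le T\}=\{\kappa_n\le L\}\subseteq\{\max_{k\le L}\text{tr}(P_{k\wedge\kappa_n}^f)>n\}$, Markov's inequality yields $\P[\tau_n^h\le T]\le\mathcal K/\sqrt n$ for every $h>0$, whence $\limsup_{h\to0}\P[\tau_n^h\le T]\le\mathcal K/\sqrt n\to0$ as $n\to\infty$.

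For \eqref{WellPosMemDisc}, bound $\int_0^{T\wedge\tau_n^h}\sum_i\|X_{\nu(s)}^{(i),a}\|^2\,ds\le h\sum_{k<\kappa_n\wedge L}\sum_i\|X_k^{(i),a}\|^2=h\sum_{k<\kappa_n\wedge L}\bigl((M-1)\text{tr}(P_k^a)+M\|\bar x_k^a\|^2\bigr)$. The contraction gives $\E[\text{tr}(P_k^a)\mathbf 1_{\{k<\kappa_n\}}]\le\E[\text{tr}(P_k^f)\mathbf 1_{\{k<\kappa_n\}}]\le n$, and a separate discrete Gronwall recursion controls $m_k:=\E[\|\bar x_k^a\|^2\mathbf 1_{\{k<\kappa_n\}}]$: from $\langle x,f(x)\rangle\le C_f(1+\|x\|^2)$ and $\|f(x)\|^2\le\tilde C_f(1+\|x\|^2)$ the forecast step gives $\|\bar x_k^f\|^2\le(1+hc_1)\|\bar x_{k-1}^a\|^2+hc_2\,\text{tr}(P_{k-1}^a)+hc_3+(\text{centred})$, while the analysis step, using $\|K_k\|\lesssim n$ on $\{k<\kappa_n\}$, $\|\bar g_k^f\|^2\lesssim 1+\text{tr}(P_k^f)+\|\bar x_k^f\|^2$, the decomposition of $\Delta Y_k$ into its $O(h)$ drift part (with $\E\|\int_{t_{k-1}}^{t_k}g(X_s^{\text{ref}})\,ds\|^2\lesssim h^2$, where $\sup_{t\in[0,T]}\E\|X_t^{\text{ref}}\|^2<\infty$ enters) and its centred $C^{1/2}$-Brownian part, together with $\E\|C^{1/2}\bar v_k\|^2=\tfrac hM\text{tr}(C)$, gives $\E[\|\bar x_k^a\|^2\mathbf 1_{\{k<\kappa_n\}}]\le(1+hC_n)\,m_k^f+h D_n$ with $m_k^f:=\E[\|\bar x_k^f\|^2\mathbf 1_{\{k<\kappa_n\}}]$ and $C_n,D_n$ polynomial in $n$. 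Discarding the centred forecast terms via $\mathbf 1_{\{k<\kappa_n\}}\le\mathbf 1_{\{k-1<\kappa_n\}}\in\mathcal F_{k-1}^a$ and using $\E[\text{tr}(P_{k-1}^a)\mathbf 1_{\{k-1<\kappa_n\}}]\le n$ yields $m_k\le(1+h\tilde c_n)m_{k-1}+h\tilde d_n$, hence $m_k\le e^{T\tilde c_n}(m_0+T\tilde d_n)$ uniformly in $h$; summing and multiplying by $h$ gives the claimed $\mathcal D(T,n;f,g,C,Q,M)$.

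The main obstacle is the EnKF analysis-step contraction: unlike in continuous time — where the gain terms $K_tCK_t^T-\tfrac2{M-1}K_t\mathcal G_tE_t^T$ already combine into a negative semidefinite matrix inside \eqref{ContTrace} — the discrete perturbation variance $hK_kCK_k^T$ does \emph{not} cancel term-by-term against the others, and one must collect all three corrections and exploit the specific form of $K_k=\tfrac1{M-1}E_k^f(\mathcal G_k^f)^T(C+\tfrac h{M-1}\mathcal G_k^f(\mathcal G_k^f)^T)^{-1}$ (and the push-through identity) to see that the resulting symmetric multiplier of $(E_k^f)^TE_k^f$ is nonpositive. The second delicate point is that $\sum_k\Delta\mathcal M_k$ has quadratic variation growing like $\text{tr}(P_k^f)^3$, so no naive $L^2$-estimate of the martingale suffices; the deflate-and-shift construction above — the discrete counterpart of the stochastic Gronwall lemma of \cite{scheutzow2013} — is what circumvents this, at the cost of the localisation bookkeeping needed to render all stopped increments integrable.
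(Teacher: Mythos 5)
Your proof is correct and, for the claim \eqref{ConvBarTau}, follows the paper's route almost exactly: the same forecast/analysis trace recursions, the same collapse of the three analysis-step correctors into $-\tfrac{h}{M-1}\,\mathrm{tr}\bigl(K_k\mathcal{G}_k^{f}(E_k^{f})^T\bigr)\le 0$ exploiting the regularized form of $K_k$, and the same interleaved half-step filtration separating the $W$- and $V$-increments. The only difference there is that the paper closes the argument by invoking the discrete stochastic Gronwall lemma of \cite{kruse2018}, whereas you reprove that lemma ad hoc via the deflated nonnegative supermartingale $Y_k$ and the maximal inequality; both yield an $h$- and $n$-uniform bound on $\mathds{E}\bigl[\sup_k \mathrm{tr}(P_{k\wedge\kappa_n}^{f})^{1/2}\bigr]$ and hence the claim (your localisation at $\kappa_n$ to secure integrability of the conditionally centred increments is a point the paper glosses over). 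For \eqref{WellPosMemDisc} you genuinely deviate: you split $\sum_i\|X_k^{(i),a}\|^2=(M-1)\,\mathrm{tr}(P_k^{a})+M\|\bar x_k^{a}\|^2$, dispose of the spread on $\{k<\kappa_n\}$ by the stopping-time bound combined with the analysis-step contraction, and run a discrete Gronwall recursion on the mean alone, whereas the paper iterates a pathwise recursion for $\sum_i\|X_k^{(i),a}\|^2$ directly and then bounds the accumulated remainder and martingale sums via Burkholder--Davis--Gundy. Your version buys a cleaner separation of scales (the spread is controlled essentially for free before the stopping time), at the cost of the extra bookkeeping needed to reduce $\langle\bar x_{k-1}^{a},\bar f_{k-1}^{a}\rangle$ to the one-sided bound $\langle x,f(x)\rangle\le C_f(1+\|x\|^2)$, which is what introduces the $\mathrm{tr}(P_{k-1}^{a})$ term you correctly carry along; I see no gap in either part.
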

\noindent
For the proof see Appendix \ref{appWellposedEnKFs}. A similar result can be found in \cite{tong2016} specifying a uniform-in-time bound on the second moments of the ensemble in the case of linear, full-rank observations. Our proceeding analysis, on the other hand, uses the more specific estimate \eqref{WellPosMemDisc}.

\subsection{Accuracy-type bounds}
As in the continuous-time case, consider a reference trajectory $X^{\text{ref}}$ given by
\begin{equation*}
{\rm d}X_t^{\text{ref}} = f\left(X_t^{\text{ref}}\right){\rm d}t + Q^{\frac{1}{2}}{\rm d}W_t,
\end{equation*}
then assuming
\begin{equation*}
\sup_{t \in [0,T]} \mathds{E}\left[\left\|X_t^{\text{ref}}\right\|^2\right] < \infty
\end{equation*}
we obtain the following result:
\begin{theorem}\label{AccDiscr}
Let $\left(X_k^{(i),a}\right)_{k \geq 0}$, $1\leq i \leq M$, denote the EnKF \eqref{EnKFF}-\eqref{EnKFA} or the ESRF algorithms \eqref{ESRFF}-\eqref{ESRFA}. Then there exist constants $\mathcal{D}^{(1)} = \mathcal{D}^{(1)}(T,n;f,g,C,Q,M)$ and $\mathcal{D}^{(2)} = \mathcal{D}^{(2)}(T,n; f,g,C,Q,M)$ such that
\begin{equation}
\mathds{E}\left[\sup_{t \in [0, T\wedge \tau_n^{h}]}\sum_{i=1}^M\left\|X_t^{{\rm ref}}-X_{\nu(t)}^{(i),a}\right\|^2\right] \leq \mathcal{D}^{(1)}\mathds{E}\left[\sum_{i=1}^M\left\|X_0^{{\rm ref}}-X_0^{(i),a}\right\|^2\right]+\mathcal{D}^{(2)}.
\end{equation}
\end{theorem}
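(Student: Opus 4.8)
The plan is to prove \Cref{AccDiscr} by deriving a one-step recursion for the quantity $\sum_{i=1}^M\|X_{t_k}^{\mathrm{ref}} - X_k^{(i),a}\|^2$, stopped at $\tau_n^h$, and then applying a discrete Gr\"onwall argument together with a maximal inequality. Write $e_k^{(i)} := X_{t_k}^{\mathrm{ref}} - X_k^{(i),a}$. First I would insert the intermediate forecast difference $X_{t_k}^{\mathrm{ref}} - X_k^{(i),f}$: on the one hand, comparing the Euler--Maruyama reference increment $X_{t_k}^{\mathrm{ref}} = X_{t_{k-1}}^{\mathrm{ref}} + hf(X_{t_{k-1}}^{\mathrm{ref}}) + Q^{1/2}W_k + (\text{Itô remainder})$ with the forecast step \eqref{EnKFF}/\eqref{ESRFF} (both driven by the \emph{same} $W_k^{(i)}$, so the $Q^{1/2}$-noise cancels up to the discrepancy between $W_k$ and $W_k^{(i)}$), the forecast error satisfies
\begin{equation*}
\|X_{t_k}^{\mathrm{ref}} - X_k^{(i),f}\|^2 \leq (1+Ch)\,\|e_{k-1}^{(i)}\|^2 + C h\,\|X_{t_{k-1}}^{\mathrm{ref}}\|^2 + \text{(remainder terms)},
\end{equation*}
using $\langle x-y, f(x)-f(y)\rangle \le |f|_+\|x-y\|^2$, the linear-growth bound $\|f(x)\|^2 \le \tilde C_f(1+\|x\|^2)$ and the standard Euler--Maruyama local-error estimate for $X^{\mathrm{ref}}$, whose contribution is $O(h^2)$ in mean square. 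On the other hand, the analysis correction in \eqref{EnKFA}, resp.\ \eqref{ESRFA}, is of the schematic form $X_k^{(i),a} - X_k^{(i),f} = -h\hat K_k g(X_k^{(i),f}) + (\text{terms in } K_k, \bar g_k^f, \Delta Y_k, \mathcal R_k^{(i)})$, all carrying a factor $h$ except the $K_k\Delta Y_k$ term, which itself equals $K_k(h g(X_{t_{k-1}}^{\mathrm{ref}}) + C^{1/2}V_k)$ and so is also $O(h)$ in the drift part plus a martingale increment.

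The key structural point is to control the Kalman-gain factors. On the event $\{t \le \tau_n^h\}$ one has $\mathrm{tr}(P_{\nu(t)}^f) \le n$, hence $\|E_k^f\|_F^2 = (M-1)\mathrm{tr}(P_k^f) \le (M-1)n$; using $\|g\|_{\mathrm{Lip}}$ this bounds $\|\mathcal G_k^f\|_F$, and since $C$ is positive definite the resolvent-type factors $(C + \tfrac{h}{M-1}\mathcal G_k^f(\mathcal G_k^f)^T)^{-1}$ (and their square roots / the integral representation \eqref{Rkh}) are bounded by $\|C^{-1}\|$ and similar constants, uniformly in $h$. Therefore $\|K_k\| + \|\hat K_k\| + \|\tilde K_k\|$ and the operator norm controlling $\mathcal R_k^{(i)}$ are all bounded by a constant $\mathcal C = \mathcal C(n;g,C,M)$ on the stopped interval. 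This is what allows each correction term to be absorbed: squaring the analysis update and using Young's inequality gives
\begin{equation*}
\|e_k^{(i)}\|^2 \leq (1+Ch)\,\|e_{k-1}^{(i)}\|^2 + Ch\bigl(1 + \|X_{t_{k-1}}^{\mathrm{ref}}\|^2 + \textstyle\sum_j \|X_{k-1}^{(j),a}\|^2\bigr) + \Delta \mathcal M_k^{(i)} + O(h^2),
\end{equation*}
where $\Delta \mathcal M_k^{(i)}$ collects the martingale increments built from $W_k - W_k^{(i)}$ and $V_k$, $V_k^{(i)}$, which are mean-zero conditionally on $\mathcal F_{t_{k-1}}$ and have conditional second moment $O(h)$.

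Summing over $i$, iterating the recursion, and taking expectations then yields, for the stopped process, a bound of the form $\E[\sum_i \|e_k^{(i)}\|^2] \le e^{CT}\bigl(\E[\sum_i\|e_0^{(i)}\|^2] + C\,\E\!\int_0^{T\wedge\tau_n^h}(1 + \|X_{\nu(s)}^{\mathrm{ref}}\|^2 + \sum_i\|X_{\nu(s)}^{(i),a}\|^2)\,\mathrm ds\bigr)$; here the integral term is finite by \eqref{WellPosMemDisc} of \Cref{WellposedEnKFs} and the standing assumption $\sup_{t\le T}\E\|X_t^{\mathrm{ref}}\|^2 < \infty$, and the accumulated $O(h^2)$ errors sum to $O(h) = O(1)$ uniformly. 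To upgrade this to the $\sup$-inside-the-expectation statement, I would apply the Burkholder--Davis--Gundy inequality to the running sum of martingale increments $\sum_{k\le\nu(t)}\Delta\mathcal M_k^{(i)}$, whose quadratic variation is again controlled by $h\sum_k(\|e_{k-1}^{(i)}\|^2 + \text{bounded terms})$, and then close the estimate with a discrete stochastic-Gr\"onwall step exactly as in the continuous-time arguments behind \eqref{WellPosPCont}. Collecting constants gives $\mathcal D^{(1)} = e^{CT}$ and $\mathcal D^{(2)}$ absorbing $\mathcal D$ from \eqref{WellPosMemDisc} and the reference-trajectory moment bound.

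The main obstacle I anticipate is \emph{not} the Gr\"onwall mechanics but the bookkeeping of the analysis-step correction terms uniformly in $h$: one must verify that every term in \eqref{EnKFA}/\eqref{ESRFA} beyond $X_k^{(i),f}$ either carries an explicit factor $h$ or is a genuine martingale increment, and that the Kalman-type gains — including the subtle integral-remainder $R_k^h$ in \eqref{Rkh} and the square-root resolvents in $\tilde K_k$ — are bounded on $\{t\le\tau_n^h\}$ by constants depending only on $n$ and the fixed data, with no hidden $h^{-1}$ or $M$-dependent blow-up. Once that uniform boundedness is in hand (which is precisely where the stopping time $\tau_n^h$ and the estimate \eqref{WellPosMemDisc} do their work), the rest is a routine, if lengthy, discrete-time analogue of the accuracy proof for the continuous filters in \Cref{AccCont}.
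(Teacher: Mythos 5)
Your proposal is correct and follows essentially the same route as the paper's proof in Appendix \ref{appAccDiscr}: decompose $X_t^{\rm ref}-X_{\nu(t)}^{(i),a}$ through the forecast step, use the stopping time $\tau_n^h$ to bound the Kalman gains via Lemma \ref{estimatesKGain}, control $\sum_i\|X_{\nu(s)}^{(i),a}\|^2$ by \eqref{WellPosMemDisc}, handle the noise terms with Burkholder--Davis--Gundy, and close with Gr\"onwall. The only (cosmetic) difference is that you iterate a one-step discrete recursion while the paper writes the telescoped error directly in integral form with $\nu(s),\eta(t)$ and uses the exact representation $\Delta Y_k=\int_{t_{k-1}}^{t_k}g(X_s^{\rm ref})\,{\rm d}s+C^{1/2}\Delta V_k$ rather than your Euler approximation of it, which avoids even having to track the local discretization remainders you mention.
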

\noindent
For the proof see Appendix \ref{appAccDiscr}.

\section{Continuous time limit}
\noindent
In this section we aim to show that the EnKBF and the ETKBF can be derived from the EnKF and the ESRF, respectively, in that we investigate convergence of the ensemble-mean squared distance
\[\sum_{i=1}^M \left\|X_{\nu(t)}^{(i),a} - X_t^{(i)}\right\|^2, \quad t \in [0,T],\]
as $h$ converges to 0. Using \eqref{YXref}, these differences allow for the following decompositions: in case of the EnKF compared with the EnKBF
\begin{equation}\label{decompDiffEnKF}
\begin{aligned}
&X_{\nu(t)}^{(i),a} - X_t^{(i)} = X_{\nu(t)}^{(i),a} - X_{\eta(t)}^{(i)} +X_{\eta(t)}^{(i)}-X_t^{(i)}\\
&= X_0^{(i),a} - X_0^{(i)}\\
&\quad +\int_0^{\eta(t)}f\left(X_{\nu(s)}^{(i),a}\right) - f\left(X_s^{(i)}\right) \\
&\hspace{2cm}+ \left(K_{\nu_{+}(s)}-K_s\right)\left(g\left(X_s^{\text{ref}}\right) - g\left(X_s^{(i)}\right)\right)\\
&\hspace{2cm} -K_{\nu_{+}(s)}\left(g\left(X_{\nu_{+}(s)}^{(i),f}\right)-g\left(X_s^{(i)}\right)\right){\rm d}s\\
&\quad + \int_0^{\eta(t)}\left(K_{\nu_{+}(s)}-K_s\right)C^{\frac{1}{2}}{\rm d}V_s\\
&\quad + \int_0^{\eta(t)}\left(K_{\nu_{+}(s)}-K_s\right)C^{\frac{1}{2}}{\rm d}V_s^{(i)}\\
&\quad + X_{\eta(t)}^{(i)} - X_t^{(i)}.
\end{aligned}
\end{equation}
as well as in the case of the ESRF compared with the ETKBF
\begin{equation}\label{decompDiffESRF}
\begin{aligned}
&X_{\nu(t)}^{(i),a} - X_t^{(i)}\\
&=X_0^{(i),a} - X_0^{(i)}\\
&\quad +\int_0^{\eta(t)}f\left(X_{\nu(s)}^{(i),a}\right) - f\left(X_s^{(i)}\right) \\
&\hspace{2cm} + \left(\hat{K}_{\nu_{+}(s)}-\frac{1}{2}K_s\right)\left(g\left(X_s^{\text{ref}}\right) - g\left(X_s^{(i)}\right)\right)\\
&\hspace{2cm}+\left(K_{\nu_{+}(s)}-\hat{K}_{\nu_{+}(s)} - \frac{1}{2}K_s\right)\left(g\left(X_s^{\text{ref}}\right)-\bar{g}_s\right)\\
&\hspace{2cm} -\hat{K}_{\nu_{+}(s)}\left(g\left(X_{\nu_{+}(s)}^{(i),f}\right)-g\left(X_s^{(i)}\right)\right) \\
&\hspace{2cm}- \left(K_{\nu_{+}(s)} - \hat{K}_{\nu_{+}(s)}\right)\left(\bar{g}_{\nu_{+}(s)}^{f} - \bar{g}_s\right){\rm d}s\\
&\quad + \int_0^{\eta(t)}\left(K_{\nu_{+}(s)}-K_s\right)C^{\frac{1}{2}}{\rm d}V_s\\
&\quad + \sum_{k=1}^{\nu(t)} \mathcal{R}_k^{(i)} + X_{\eta(t)}^{(i)} - X_t^{(i)}.
\end{aligned}
\end{equation}
Our proof makes use of the following a priori estimates:
\begin{lemma}\label{estimatesKGain}
The operator norms of the discrete-time and continuous-time Kalman gains respectively satisfy
\begin{align*}
\left\|K_{\nu(t)}\right\| &\leq \kappa_K{\rm tr}\left(P_{\nu(t)}^{f}\right)\\
\left\|\hat{K}_{\nu(t)}\right\| &\leq \frac{1}{2}\kappa_K{\rm tr}\left(P_{\nu(t)}^{f}\right)\\
\left\|K_t\right\| &\leq \kappa_K{\rm tr}\left(P_t\right)
\end{align*}
where $\kappa_K := \|g\|_{\text{Lip}}\left\|C^{-1}\right\|$. Further let $\Delta_t$ denote 
\[ \left\|K_{\nu(t)} - K_t\right\|, \left\|\hat{K}_{\nu(t)} - \frac{1}{2}K_t\right\|, \text{ or } \left\|K_{\nu(t)}-\hat{K}_{\nu(t)}-\frac{1}{2}K_t\right\|,\]
then $\Delta_t$ satisfies the following estimate
\begin{equation}\label{diffGains}
\Delta_t \leq \kappa_{\text{diff}}\left( h{\rm tr}\left(P_{\nu(t)}^{f}\right)^2 + \left({\rm tr}\left(P_{\nu(t)}^{f}\right)^{\frac{1}{2}}+{\rm tr}(P_t)^{\frac{1}{2}}\right)\left(\sum_{i=1}^M \left\|X_{\nu(t)}^{(i),f} - X_t^{(i)}\right\|^2\right)^{\frac{1}{2}}\right)
\end{equation}
for a constant $\kappa_{\text{diff}}$ different for each case of $\Delta_t$, but independent of $h$.
\end{lemma}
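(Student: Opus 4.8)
The plan is to prove the three operator-norm bounds first and then the difference estimate \eqref{diffGains}, in both parts relying on the \emph{centering identity}
\[ E\mathcal{G}^T \;=\; \sum_{i=1}^M \bigl(X^{(i)}-\bar x\bigr)\bigl(g(X^{(i)})-\bar g\bigr)^T \;=\; \sum_{i=1}^M \bigl(X^{(i)}-\bar x\bigr)\bigl(g(X^{(i)})-g(\bar x)\bigr)^T, \]
valid because $\sum_i(X^{(i)}-\bar x)=0$ (and likewise with $\bar x_k^f$ in place of $\bar x$ for the forecast ensemble). Together with $\|ab^T\|=\|a\|\,\|b\|$, the Lipschitz bound $\|g(X^{(i)})-g(\bar x)\|\le\|g\|_{\text{Lip}}\|X^{(i)}-\bar x\|$ and Cauchy--Schwarz, this gives $\|E_t\mathcal{G}_t^T\|\le\|g\|_{\text{Lip}}(M-1)\,\text{tr}(P_t)$ and the analogue for $E_{\nu(t)}^f(\mathcal{G}_{\nu(t)}^f)^T$. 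For $K_t=\tfrac1{M-1}E_t\mathcal{G}_t^TC^{-1}$ this is already the asserted bound; for $K_{\nu(t)}$ one additionally uses that $C+\tfrac h{M-1}\mathcal{G}_{\nu(t)}^f(\mathcal{G}_{\nu(t)}^f)^T\ge C$, so that its inverse has norm $\le\|C^{-1}\|$; for $\hat K_{\nu(t)}$ in the EAKF/ETKF case the bound is immediate from the explicit formula, whereas for the unperturbed filter one writes $\hat K_{\nu(t)}=\tfrac1{M-1}E_{\nu(t)}^f(\mathcal{G}_{\nu(t)}^f)^T D^{-1/2}(C^{1/2}+D^{1/2})^{-1}$ with $D:=C+\tfrac h{M-1}\mathcal{G}_{\nu(t)}^f(\mathcal{G}_{\nu(t)}^f)^T\ge C>0$, and Weyl's inequality $\lambda_{\min}(C^{1/2}+D^{1/2})\ge\lambda_{\min}(C^{1/2})+\lambda_{\min}(D^{1/2})$ together with operator monotonicity of the square root yields $\|D^{-1/2}(C^{1/2}+D^{1/2})^{-1}\|\le\tfrac12\|C^{-1}\|$, producing precisely the factor $\tfrac12$.

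For the difference estimate, abbreviate $k:=\nu(t)$ and $D_k:=C+\tfrac h{M-1}\mathcal{G}_k^f(\mathcal{G}_k^f)^T\ge C$. In each of the three cases one rewrites the relevant gain difference as
\[ \frac{1}{M-1}E_k^f\bigl(\mathcal{G}_k^f\bigr)^T\,\Xi_k \;+\; \frac{c}{M-1}\Bigl(E_k^f\bigl(\mathcal{G}_k^f\bigr)^T-E_t\mathcal{G}_t^T\Bigr)C^{-1}, \qquad c\in\{\tfrac12,1\}, \]
where the ``inner'' matrix $\Xi_k$ is $D_k^{-1}-C^{-1}$ (for $\|K_{\nu(t)}-K_t\|$ and the EAKF/ETKF variants), or $D_k^{-1/2}(C^{1/2}+D_k^{1/2})^{-1}-\tfrac12 C^{-1}$, or the difference of these two (for the unperturbed filter). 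The first summand is bounded by proving $\|\Xi_k\|\lesssim h\,\text{tr}(P_k^f)$: when $\Xi_k=D_k^{-1}-C^{-1}$ this follows from the resolvent identity $D_k^{-1}-C^{-1}=-D_k^{-1}\bigl(\tfrac h{M-1}\mathcal{G}_k^f(\mathcal{G}_k^f)^T\bigr)C^{-1}$ and $\|\mathcal{G}_k^f\|_F^2=\sum_i\|g(X_k^{(i),f})-\bar g_k^f\|^2\le\|g\|_{\text{Lip}}^2(M-1)\,\text{tr}(P_k^f)$ (the empirical mean minimizes the sum of squared deviations); combined with $\|E_k^f(\mathcal{G}_k^f)^T\|\lesssim\text{tr}(P_k^f)$ this yields the term $h\,\text{tr}(P_k^f)^2$ of \eqref{diffGains}.

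For the second summand, apply the centering identity to both $E_k^f(\mathcal{G}_k^f)^T$ and $E_t\mathcal{G}_t^T$ and, with $\delta_i:=X_{\nu(t)}^{(i),f}-X_t^{(i)}$, $\bar\delta:=\tfrac1M\sum_i\delta_i$, split $E_k^f(\mathcal{G}_k^f)^T-E_t\mathcal{G}_t^T$ into $\sum_i(\delta_i-\bar\delta)\bigl(g(X_k^{(i),f})-g(\bar x_k^f)\bigr)^T$ plus $\sum_i(X_t^{(i)}-\bar x_t)\bigl[(g(X_k^{(i),f})-g(X_t^{(i)}))-(g(\bar x_k^f)-g(\bar x_t))\bigr]^T$. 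Estimating each summand by $\|g\|_{\text{Lip}}$ times a product of two norms, then using Cauchy--Schwarz and the elementary inequalities $\sum_i\|\delta_i-\bar\delta\|^2\le\sum_i\|\delta_i\|^2$ and $\sum_i(\|\delta_i\|+\|\bar\delta\|)^2\le4\sum_i\|\delta_i\|^2$, gives
\[ \bigl\|E_k^f(\mathcal{G}_k^f)^T-E_t\mathcal{G}_t^T\bigr\|\;\lesssim\;(M-1)^{1/2}\bigl(\text{tr}(P_k^f)^{1/2}+\text{tr}(P_t)^{1/2}\bigr)\Bigl(\sum_{i=1}^M\bigl\|X_{\nu(t)}^{(i),f}-X_t^{(i)}\bigr\|^2\Bigr)^{1/2}, \]
which after multiplication by $\tfrac c{M-1}\|C^{-1}\|$ is exactly the remaining term of \eqref{diffGains}; summing the two contributions and collecting all dimensional and Lipschitz constants into $\kappa_{\text{diff}}$ closes the EnKF and EAKF/ETKF cases.

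The main obstacle is the unperturbed-filter case, where $\Xi_k$ contains $D_k^{-1/2}(C^{1/2}+D_k^{1/2})^{-1}-\tfrac12C^{-1}$ and one needs \emph{Lipschitz} (not merely H\"older) dependence on $D_k-C$. The key point is that, since $D_k\ge C>0$, the operator square root is Lipschitz with constant $\tfrac12\|C^{-1}\|^{1/2}$: as $C^{1/2}-D_k^{1/2}$ solves the Sylvester equation $C^{1/2}X+XD_k^{1/2}=C-D_k$, its integral representation gives $\|C^{1/2}-D_k^{1/2}\|\le\|C-D_k\|/(\lambda_{\min}(C^{1/2})+\lambda_{\min}(D_k^{1/2}))\le\tfrac12\|C^{-1}\|^{1/2}\|C-D_k\|$. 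Feeding this into $D_k^{-1/2}-C^{-1/2}=D_k^{-1/2}(C^{1/2}-D_k^{1/2})C^{-1/2}$ and
\begin{align*}
D_k^{-1/2}\bigl(C^{1/2}+D_k^{1/2}\bigr)^{-1}-\tfrac12C^{-1} &= \bigl(D_k^{-1/2}-C^{-1/2}\bigr)\bigl(C^{1/2}+D_k^{1/2}\bigr)^{-1}\\
&\quad+C^{-1/2}\bigl(C^{1/2}+D_k^{1/2}\bigr)^{-1}\bigl(C^{1/2}-D_k^{1/2}\bigr)\bigl(2C^{1/2}\bigr)^{-1},
\end{align*}
together with $\|C-D_k\|\le h\|g\|_{\text{Lip}}^2\,\text{tr}(P_k^f)$, again yields $\|\Xi_k\|\lesssim h\,\text{tr}(P_k^f)$ (for $\|K_{\nu(t)}-\hat K_{\nu(t)}-\tfrac12K_t\|$ one first writes $\Xi_k=(D_k^{-1}-C^{-1})-\bigl(D_k^{-1/2}(C^{1/2}+D_k^{1/2})^{-1}-\tfrac12C^{-1}\bigr)$). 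Hence the $h\,\text{tr}(P_k^f)^2$ contribution follows as before, and the second summand is handled identically to the EnKF case.
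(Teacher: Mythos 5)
Your proposal is correct and follows essentially the same route as the paper: the centering identity $E\mathcal{G}^T=E\tilde{\mathcal{G}}^T$, the splitting of each gain difference into an inverse-matrix perturbation term (giving $h\,{\rm tr}(P^f_{\nu(t)})^2$) and an ensemble-difference term, and the same two-part decomposition of $E_k^f(\mathcal{G}_k^f)^T-E_t\mathcal{G}_t^T$. The only notable difference is that you substitute a direct resolvent identity for the paper's Neumann series and supply, via the Sylvester equation, a full justification of the square-root Lipschitz bound $\|C^{1/2}-D_k^{1/2}\|\lesssim\|C-D_k\|$ that the paper only asserts.
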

\noindent
For the proof see Appendix \ref{appLemma}. The convergence of the EnKF to the EnKBF is thence made precise in the following:
\newpage
\begin{theorem}\label{mainResult1}
Let $\left(W_t^{(i)}\right), \left(V_t^{(i)}\right), i=1,...,M,$ be independent Brownian motions and denote $W_k^{(i)} := W_{t_k}^{(i)} -W_{t_{k-1}}^{(i)}$ and $V_k^{(i)} := V_{t_k}^{(i)} - V_{t_{k-1}}^{(i)}$. Let $\left(X_k^{(i),f}, X_k^{(i),a}\right), i=1,...,M$, denote the EnKF \eqref{EnKFF}-\eqref{EnKFA} with respect to $\left(W_k^{(i)}\right), \left(V_k^{(i)}\right)$ and $\left(X_t^{(i)}\right), i=1,...,M$, a strong solution of the EnKBF-SDE \eqref{EnKBF} with respect to $W_t^{(i)}$ and $V_t^{(i)}$. Then $X_t^{(i)}$ forms a continuous time limit of $X_k^{(i),f}, X_k^{(i),a}$ in the sense that if
\begin{equation}
\mathbb{E}\left[\sum_{i=1}^M \left\|X_0^{(i),a}-X_0^{(i)}\right\|^2\right]\lesssim h,
\end{equation}
then there exists a constant $\mathcal{C}= \mathcal{C}(T,M,n,m_n)$ such that
\begin{equation}
\mathds{E}\left[\sup_{t \in [0, T \wedge \tau_n \wedge \tau_n^{h} \wedge \tau_n^{{\rm ref}}]}\sum_{i=1}^M\left\|X_{\nu(t)}^{(i),a} - X_t^{(i)}\right\|^2\right] \leq \mathcal{C}h
\end{equation}
where for the stopping time
\begin{equation*}
\tau_n^{{\rm ref}} := \inf\left\{ t > 0: \sum_{i=1}^M \left\|X_t^{{\rm ref}}-X_t^{(i)}\right\|^2 > m_n\right\}
\end{equation*}
the value $m_n$ can be chosen in such a way that
\begin{equation}
\limsup_{n \rightarrow \infty} \mathds{P}\left[\tau_n^{{\rm ref}} \leq T \wedge \tau_n\right] =0.
\end{equation}
for every $T>0$.
\end{theorem}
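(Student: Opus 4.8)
The plan is to run a Gronwall argument on the \emph{grid-point error} $e_k^{(i)} := X_k^{(i),a} - X_{t_k}^{(i)}$ and then recover the quantity in the statement by splitting $X_{\nu(t)}^{(i),a} - X_t^{(i)} = e_{\nu(t)}^{(i)} + \big(X_{\eta(t)}^{(i)} - X_t^{(i)}\big)$, the last term being an increment of the EnKBF over a single $h$-block. Throughout we fix $n$ and work on $[0, T \wedge \sigma_n]$ with $\sigma_n := \tau_n \wedge \tau_n^{h} \wedge \tau_n^{{\rm ref}}$; on this interval ${\rm tr}(P_t) \le n$, ${\rm tr}(P_{\nu(t)}^{f}) \le n$ and $\sum_i \|X_t^{{\rm ref}} - X_t^{(i)}\|^2 \le m_n$, so by Lemma \ref{estimatesKGain} all Kalman gains occurring below have operator norm $\le \kappa_K n$, and by Theorem \ref{WellposedEnKBF} and Theorem \ref{WellposedEnKFs} the quantities $\sum_i \|X_t^{(i)}\|^2$ and $\sum_i \|X_{\nu(t)}^{(i),a}\|^2$ are $L^1$-bounded uniformly in $h$; hence $f$ and $g$ evaluated along the ensembles have square-integrable norm with constants depending only on $(n, m_n, T, M)$, and the coefficients of \eqref{EnKBF} are bounded on the stopped interval.

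\textbf{Step 1: decomposition at grid points and the martingale structure.} Summing \eqref{EnKFF}--\eqref{EnKFA} over $k$ and subtracting the integrated form of \eqref{EnKBF}, the free Brownian increment $Q^{1/2}W_{t_k}^{(i)}$ cancels and one is left, for $t = t_k$, with \eqref{decompDiffEnKF} in which the term $X_{\eta(t_k)}^{(i)} - X_{t_k}^{(i)}$ vanishes:
\[
e_k^{(i)} = e_0^{(i)} + \int_0^{t_k} {\rm drift}_s^{(i)}\,{\rm d}s + \mathcal{I}_k^{(i)} + \mathcal{J}_k^{(i)},
\]
where ${\rm drift}_s^{(i)}$ is the drift integrand of \eqref{decompDiffEnKF} and $\mathcal{I}_k^{(i)} := \sum_{j \le k} K_j C^{1/2} \Delta V_j - \int_0^{t_k} K_s C^{1/2}{\rm d}V_s$, $\mathcal{J}_k^{(i)} := \sum_{j \le k} K_j C^{1/2} \Delta V_j^{(i)} - \int_0^{t_k} K_s C^{1/2} {\rm d}V_s^{(i)}$. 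The key point is that, although the discrete and continuous representations of these noise terms are not pathwise close, $K_j$ is independent of the increments of $V$ and $V^{(i)}$ on $[t_{j-1}, t_j]$; hence $\mathcal{I}_k^{(i)} = \int_0^{t_k}(K_{\nu_+(s)} - K_s) C^{1/2}{\rm d}V_s$ and $\mathcal{J}_k^{(i)} = \int_0^{t_k}(K_{\nu_+(s)} - K_s) C^{1/2}{\rm d}V_s^{(i)}$ are genuine martingales (in the filtration enlarged block-wise by the $W_j^{(i)}$), whose predictable quadratic variation is controlled by $\Delta_s := \|K_{\nu_+(s)} - K_s\|$.

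\textbf{Step 2: closing the loop.} For $s \in [t_{j-1}, t_j)$ one has $X_{\nu_+(s)}^{(i),f} = X_{j-1}^{(i),a} + hf(X_{j-1}^{(i),a}) + Q^{1/2}W_j^{(i)}$ and $X_s^{(i)} = X_{t_{j-1}}^{(i)} + \int_{t_{j-1}}^{s}(\cdots){\rm d}u + Q^{1/2}(W_s^{(i)} - W_{t_{j-1}}^{(i)})$; subtracting and using boundedness of the coefficients on the stopped interval gives, with $\Psi(t) := \mathds{E}\big[\sup_{r \in [0, t \wedge \sigma_n]}\sum_i \|e_{\nu(r)}^{(i)}\|^2\big]$,
\[
\mathds{E}\Big[\mathbf{1}_{\{s < \sigma_n\}}\sum_i \big\|X_{\nu_+(s)}^{(i),f} - X_s^{(i)}\big\|^2\Big] \lesssim \Psi(s) + C_n h ,
\]
the $C_n h$ absorbing the Euler step $hf$, the mismatched Brownian increment $Q^{1/2}(W_{t_j}^{(i)} - W_s^{(i)})$ and the Kalman corrections over $[t_{j-1}, s]$. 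Inserting this into \eqref{diffGains} and using ${\rm tr}(P_{\nu_+(s)}^{f}), {\rm tr}(P_s) \le n$ yields $\mathds{E}[\mathbf{1}_{\{s < \sigma_n\}}\Delta_s^2] \lesssim C_n h + n\,\Psi(s)$; likewise $\mathds{E}[\sum_i \|X_{\eta(s)}^{(i)} - X_s^{(i)}\|^2] \lesssim C_n h$ for fixed $s$ by standard one-block SDE estimates. Now square the display of Step 1, sum over $i$, take $\sup_{t_k \le t \wedge \sigma_n}$ and expectation: the initial term is $\lesssim h$ by hypothesis; Doob's $L^2$-inequality bounds the two martingale terms by $\int_0^t \mathds{E}[\mathbf{1}_{\{s<\sigma_n\}}\Delta_s^2]\,{\rm d}s$; and Cauchy--Schwarz in time bounds the drift term by $T\,\mathds{E}\int_0^{t \wedge \sigma_n}\sum_i \|{\rm drift}_s^{(i)}\|^2\,{\rm d}s$, whose three summands are estimated by the Lipschitz bound on $f$ (together with $\|X_{\nu(s)}^{(i),a} - X_s^{(i)}\|^2 \lesssim \|e_{\nu(s)}^{(i)}\|^2 + \|X_{\eta(s)}^{(i)} - X_s^{(i)}\|^2$), by $\Delta_s^2 \sum_i \|X_s^{{\rm ref}} - X_s^{(i)}\|^2 \le \Delta_s^2 m_n$ for the $(K_{\nu_+(s)} - K_s)$-term, and by $\|K_{\nu_+(s)}\|^2 \le (\kappa_K n)^2$ together with Step 2 for the remaining term. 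Collecting, $\Psi(t) \le \mathcal{C}_0 h + \mathcal{C}_1\int_0^t \Psi(s)\,{\rm d}s$ with $\mathcal{C}_0, \mathcal{C}_1$ depending only on $(n, m_n, T, M)$, so Gronwall's lemma gives $\Psi(T) \le \mathcal{C} h$. Since $\sum_i \|X_{\nu(t)}^{(i),a} - X_t^{(i)}\|^2 \le 2\sum_i \|e_{\nu(t)}^{(i)}\|^2 + 2\sum_i \|X_{\eta(t)}^{(i)} - X_t^{(i)}\|^2$, taking $\sup_{t \le T \wedge \sigma_n}$ and expectation and using $\Psi(T) \le \mathcal{C}h$ together with the $\mathcal{O}(h)$ bound on the EnKBF modulus of continuity gives the asserted estimate. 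Finally, the claim on $\tau_n^{{\rm ref}}$ follows from Theorem \ref{AccCont}: it bounds $\mathds{E}[\sup_{t \le T \wedge \tau_n}\sum_i \|X_t^{{\rm ref}} - X_t^{(i)}\|^2]$ by some $b_n < \infty$, and choosing $m_n \uparrow \infty$ with $b_n / m_n \to 0$ gives $\mathds{P}[\tau_n^{{\rm ref}} \le T \wedge \tau_n] \le b_n / m_n \to 0$ by Chebyshev.

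\textbf{Main obstacle.} The crux is Step 2: because the Kalman gain grows cubically in the ensemble it can only be controlled after stopping, and one must then verify that the forecast ensemble $X_{\nu_+(s)}^{(i),f}$, which already sits at the next grid point and contains a full Euler increment, differs from the continuous filter $X_s^{(i)}$ only by the tracked error $e_{\nu(s)}^{(i)}$ plus a genuine $\mathcal{O}(h)$ Euler--Maruyama remainder, uniformly in $h$, and that this survives being fed through the gain-difference estimate \eqref{diffGains}. A second, easily overlooked point is that the discrete noise $\sum_j K_j C^{1/2}\Delta V_j^{(\cdot)}$ must be compared with $\int K_s C^{1/2}{\rm d}V_s^{(\cdot)}$ at the level of martingales --- exploiting that $K_j$ is independent of the driving increment on its own block --- since a naive pathwise estimate of that difference would only be $\mathcal{O}(1)$.
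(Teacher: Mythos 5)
Your proposal is correct and follows essentially the same route as the paper's proof: the same grid-point decomposition (the paper's \eqref{decompDiffEnKF} already separates $X_{\nu(t)}^{(i),a}-X_{\eta(t)}^{(i)}$ from the one-block increment $X_{\eta(t)}^{(i)}-X_t^{(i)}$), the same rewriting of the noise mismatch as the stochastic integral $\int_0^{\eta(t)}(K_{\nu_+(s)}-K_s)C^{1/2}{\rm d}V_s^{(\cdot)}$, the same one-block forecast estimate \eqref{EstXFXA} fed into the gain-difference bound \eqref{diffGains}, localization by $\tau_n\wedge\tau_n^h\wedge\tau_n^{\rm ref}$ with the well-posedness theorems supplying the $L^1$ bounds, a maximal inequality (you use Doob where the paper uses Burkholder--Davis--Gundy, which is immaterial here), and a Gronwall argument, with $m_n$ chosen exactly as in the paper via Theorem \ref{AccCont} and Markov's inequality. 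Your explicit remark that $K_{\nu_+(s)}$ is only a martingale integrand in a block-wise enlarged filtration (being independent of the $V$-increments on its own block) is a point the paper leaves implicit, but it does not change the argument.
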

\begin{proof}
Recall from Theorem \ref{AccCont} that there exist constants $\mathcal{C}^{(1)}$ and $\mathcal{C}^{(2)}$ such that
\begin{equation*}
\mathds{E}\left[\sup_{t \in [0, T\wedge \tau_n]} \sum_{i=1}^M \left\|X_t^{\text{ref}} - X_t^{(i)}\right\|^2\right] \leq \mathcal{C}^{(1)}\mathds{E}\left[\sum_{i=1}^M\left\|X_0^{\text{ref}}-X_0^{(i)}\right\|^2\right] + \mathcal{C}^{(2)}.
\end{equation*}
By their explicit form given in \eqref{RefContDetail} in Appendix \ref{appAccCont}, we may choose $m_n$ such that
\[ \max\left(\mathcal{C}^{(1)},\mathcal{C}^{(2)}\right) \in o(m_n)\]
which then yields by Markov's inequality
\begin{align*}
\mathds{P}\left[\tau_n^{\text{ref}} \leq T \wedge \tau_n\right] &= \mathds{P}\left[\sum_{i=1}^M \left\|X_{T\wedge \tau_n}^{\text{ref}} - X_{T\wedge \tau_n}^{(i)}\right\|^2 > m_n\right]\\
&\leq \frac{1}{m_n} \mathds{E}\left[\sup_{t \in [0,T \wedge \tau_n]} \sum_{i=1}^M \left\|X_t^{\text{ref}} - X_t^{(i)}\right\|^2\right] \longrightarrow 0, n \rightarrow \infty.
\end{align*}
Next we consider \eqref{decompDiffEnKF} in the ensemble-mean squared norm and apply the Cauchy-Schwarz inequality to obtain
\begin{equation}
\begin{aligned}
&\sum_{i=1}^M \left\|X_{\nu(t)}^{(i),a} - X_t^{(i)}\right\|^2\\
&\lesssim \sum_{i=1}^M \left\|X_0^{(i),a} - X_0^{(i)}\right\|^2\\
&\quad + \eta(t) \int_0^{\eta(t)} \|f\|_{\text{Lip}}^2\sum_{i=1}^M\left\|X_{\nu(s)}^{(i),a} - X_s^{(i)}\right\|^2 \\
&\hspace{2.5cm}+ \left\|K_{\nu_{+}(s)}-K_s\right\|^2\|g\|_{\text{Lip}}^2\sum_{i=1}^M \left\|X_s^{\text{ref}}-X_s^{(i)}\right\|^2\\
&\hspace{2.5cm} + \left\|K_{\nu_{+}(s)}\right\|^2\|g\|_{\text{Lip}}^2\sum_{i=1}^M\left\|X_{\nu_{+}(s)}^{(i),f} - X_s^{(i)}\right\|^2{\rm d}s\\
&\quad + M \left\|\int_0^{\eta(t)}\left(K_{\nu_{+}(s)}-K_s\right)C^{\frac{1}{2}}{\rm d}V_s\right\|^2\\
&\quad + \sum_{i=1}^M \left\|\int_0^{\eta(t)}\left(K_{\nu_{+}(s)}-K_s\right)C^{\frac{1}{2}}{\rm d}V_s^{(i)}\right\|^2 + \sum_{i=1}^M \left\|X_{\eta(t)}^{(i)} - X_t^{(i)}\right\|^2.
\end{aligned}
\end{equation}
Using Lemma \ref{estimatesKGain} together with
\begin{equation}\label{EstXFXA}
\begin{aligned}
\sum_{i=1}^M \left\|X_{\nu_{+}(s)}^{(i),f} - X_s^{(i)}\right\|^2 &\lesssim \sum_{i=1}^M \left\|X_{\nu(s)}^{(i),a} - X_s^{(i)}\right\|^2 \\
&\quad+ h^2\tilde{C}_f\left(M+\sum_{i=1}^M\left\|X_{\nu(s)}^{(i),a}\right\|^2\right) + \sum_{i=1}^M\left\|Q^{\frac{1}{2}}W_{\nu_{+}(s)}^{(i)}\right\|^2
\end{aligned}
\end{equation}
yields by definition of the stopping times an estimate of the form
\newpage
\begin{equation*}
\begin{aligned}
&\mathds{E}\left[\sup_{t \in[0,T \wedge \tau_n \wedge \tau_n^{h} \wedge \tau_n^{\text{ref}}]} \sum_{i=1}^M \left\|X_{\nu(t)}^{(i),a}-X_t^{(i)}\right\|^2\right]\\
&\lesssim \mathds{E}\left[\sum_{i=1}^M\left\|X_0^{(i),a} - X_0^{(i)}\right\|^2\right]\\
&\quad + T\mathds{E}\left[\int_0^{T \wedge \tau_n \wedge \tau_n^{h} \wedge \tau_n^{\text{ref}}} L(n, m_n)\sup_{r \in [0, s \wedge \tau_n \wedge \tau_n^{h} \wedge \tau_n^{\text{ref}}]}\sum_{i=1}^M \left\|X_{\nu(r)}^{(i),a} - X_r^{(i)}\right\|^2\right. \\
&\hspace{2.5cm} + R^{(1)}(n,m_n)\left(h^2\tilde{C}_f\left(M+\sum_{i=1}^M\left\|X_{\nu(s)}^{(i),a}\right\|^2\right)+\sum_{i=1}^M\left\|Q^{\frac{1}{2}}W_{\nu_{+}(s)}^{(i)}\right\|^2\right)\\
&\hspace{2.5cm}\left. + h^2R^{(2)}(n,m_n){\rm d}s\right]\\
&\quad + M\mathds{E}\left[\sup_{t \in[0,T \wedge \tau_n \wedge \tau_n^{h} \wedge \tau_n^{\text{ref}}]}\left\|\int_0^{\eta(t)}\left(K_{\nu_{+}(s)}-K_s\right)C^{\frac{1}{2}}{\rm d}V_s\right\|^2\right]\\
&\quad + \sum_{i=1}^M \mathds{E}\left[\sup_{t \in[0,T \wedge \tau_n \wedge \tau_n^{h} \wedge \tau_n^{\text{ref}}]}\left\|\int_0^{\eta(t)}\left(K_{\nu_{+}(s)}-K_s\right)C^{\frac{1}{2}}{\rm d}V_s^{(i)}\right\|^2\right]\\
&\quad + \sum_{i=1}^M \mathds{E}\left[\sup_{t \in[0,T \wedge \tau_n \wedge \tau_n^{h} \wedge \tau_n^{\text{ref}}]}\left\|X_{\eta(t)}^{(i)} - X_t^{(i)}\right\|^2\right].
\end{aligned}
\end{equation*}
By Theorem \ref{WellposedEnKFs} we have that
\[\mathds{E}\left[\int_0^{T \wedge \tau_n^{h}}\sum_{i=1}^M \left\|X_{\nu(s)}^{(i),a}\right\|^2\right]\]
is bounded. Furthermore by the Burkholder-Davis-Gundy inequality, estimate \eqref{diffGains} and \eqref{EstXFXA} we obtain
\begin{equation*}
\begin{aligned}
&\mathds{E}\left[\sup_{t \in[0,T \wedge \tau_n \wedge \tau_n^{h} \wedge \tau_n^{\text{ref}}]}\left\|\int_0^{\eta(t)}\left(K_{\nu_{+}(s)}-K_s\right)C^{\frac{1}{2}}{\rm d}V_s\right\|^2\right]\\
&\leq C_{BDG} \mathds{E}\left[\int_0^{T \wedge \tau_n \wedge \tau_n^{h} \wedge \tau_n^{\text{ref}}}\left\|K_{\nu_{+}(s)}-K_s\right\|^2\text{tr}(C){\rm d}s\right]\\
&\lesssim n\left(Th^2n^3 + \int_0^T \mathds{E}\left[\sup_{r \in[0,s \wedge \tau_n \wedge \tau_n^{h} \wedge \tau_n^{\text{ref}}]}\sum_{i=1}^M\left\|X_{\nu(r)}^{(i),a}-X_r^{(i)}\right\|^2\right]{\rm d}s \right.\\
&\qquad \left.+ \mathds{E}\left[\int_0^{T \wedge \tau_n^{h}} h^2\tilde{C}_f\left(M+\sum_{i=1}^M\left\|X_{\nu(s)}^{(i),a}\right\|^2\right){\rm d}s\right] + TMh\|Q\|\right)
\end{aligned}
\end{equation*}
where in the last line we employ Theorem \ref{WellposedEnKFs}. Finally estimate
\begin{align*}
&\left\|X_{\eta(t)}^{(i)} - X_t^{(i)}\right\|^2\notag\\
&\lesssim (t-\eta(t)) \int_{\eta(t)}^t \tilde{C}_f\left(1+\left\|X_s^{(i)}\right\|^2\right)+\left\|K_s\right\|^2\|g\|_{\text{Lip}}^2\left\|X_s^{\text{ref}}-X_s^{(i)}\right\|^2{\rm d}s \notag\\
&\quad + \left\|Q^{\frac{1}{2}}\left(W_t^{(i)}-W_{\eta(t)}^{(i)}\right)\right\|^2 + \left\|\int_{\eta(t)}^t K_sC^{\frac{1}{2}}{\rm d}V_s\right\|^2 + \left\|\int_{\eta(t)}^t K_sC^{\frac{1}{2}}{\rm d}V_s^{(i)}\right\|^2
\end{align*}
which together with Theorem \ref{WellposedEnKBF} finally yields
\begin{equation*}
\begin{aligned}
&\mathds{E}\left[\sup_{t \in[0,T \wedge \tau_n \wedge \tau_n^{h} \wedge \tau_n^{\text{ref}}]}\sum_{i=1}^M \left\|X_{\nu(t)}^{(i),a}-X_t^{(i)}\right\|^2\right]\\
&\lesssim \mathds{E}\left[\sum_{i=1}^M \left\|X_0^{(i),a}-X_0^{(i)}\right\|^2\right]\\
&\quad + C^{(1)}(T,M,n,m_n)\int_0^T\mathds{E}\left[\sup_{r \in[0,s \wedge \tau_n \wedge \tau_n^{h} \wedge \tau_n^{\text{ref}}]}\sum_{i=1}^M\left\|X_{\nu(r)}^{(i),a}-X_r^{(i)}\right\|^2\right] {\rm d}s \\
&\quad+ hC^{(2)}(T,M,n,m_n,h)
\end{aligned}
\end{equation*}
where
\begin{align*}
C^{(1)}(T,M,n,m_n) &:= \tilde{C}^{(1)}\left(T(1 + nm_n)+nM\right),\\
C^{(2)}(T,M,n,m_n,h)&:= \tilde{C}^{(2)}\left(h\left((Tn(1+n)+M)(1+MT)\right.\right.\\
&\hspace{2cm}\left.+T^2n^4m_n+1+M+n^2m_n\right)\\
&\hspace{1.5cm}\left.+(Tn(1+n)+M)TM+M(1+n^2)\right)
\end{align*}
and $\tilde{C}^{(1)}, \tilde{C}^{(2)}$ are positive constants determined by $\|f\|_{\rm Lip}^2$, $\|g\|_{\rm Lip}^2$, $\|Q\|$, ${\rm tr}(C)$, $\kappa_{\rm diff}^2$, $\kappa_K^2$ and the constants in Theorem \ref{WellposedEnKBF} and Theorem \ref{WellposedEnKFs}. Thus applying a standard Gronwall argument and imposing
\[\mathds{E}\left[\sum_{i=1}^M \left\|X_0^{(i),a}-X_0^{(i)}\right\|^2\right]\lesssim h\]
yields the claim.
\end{proof}
\newpage
\noindent
A similar result holds in the case of the ESRF converging to the ETKBF:
\begin{theorem}\label{mainResult2}
Let $\left(W_t^{(i)}\right), \left(V_t^{(i)}\right), i=1,...,M,$ be independent Brownian motions and denote $W_k^{(i)} := W_{t_k}^{(i)} -W_{t_{k-1}}^{(i)}$ and $V_k^{(i)} := V_{t_k}^{(i)} - V_{t_{k-1}}^{(i)}$. Let $\left(X_k^{(i),f}, X_k^{(i),a}\right), i=1,...,M$, denote the two ESRF algorithms \eqref{ESRFF}-\eqref{ESRFA} with respect to $\left(W_k^{(i)}\right), \left(V_k^{(i)}\right)$ and $\left(X_t^{(i)}\right), i=1,...,M$, a strong solution of the ETKBF-SDE \eqref{ETKBF} with respect to $W_t^{(i)}$ and $V_t^{(i)}$. Then $X_t^{(i)}$ forms a continuous time limit of $X_k^{(i),f}, X_k^{(i),a}$ in the sense that if
\begin{equation}
\mathbb{E}\left[\sum_{i=1}^M \left\|X_0^{(i),a}-X_0^{(i)}\right\|^2\right]\lesssim h,
\end{equation}
then there exists a constant $\mathcal{C}= \mathcal{C}(T,M,n,m_n)$ such that
\begin{equation}
\mathds{E}\left[\sup_{t \in [0, T \wedge \tau_n \wedge \tau_n^{h} \wedge \tau_n^{{\rm ref}}]}\sum_{i=1}^M\left\|X_{\nu(t)}^{(i),a} - X_t^{(i)}\right\|^2\right] \leq \mathcal{C}h
\end{equation}
where for the stopping time
\begin{equation*}
\tau_n^{{\rm ref}} := \inf\left\{ t > 0: \sum_{i=1}^M \left\|X_t^{{\rm ref}}-X_t^{(i)}\right\|^2 > m_n\right\}
\end{equation*}
the value $m_n$ can be chosen in such a way that
\begin{equation}
\limsup_{n \rightarrow \infty} \mathds{P}\left[\tau_n^{{\rm ref}} \leq T \wedge \tau_n\right] =0
\end{equation}
for every $T>0$.
\end{theorem}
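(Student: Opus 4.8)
The plan is to mirror the proof of Theorem \ref{mainResult1}, replacing the EnKF decomposition \eqref{decompDiffEnKF} by the ETKBF decomposition \eqref{decompDiffESRF} and tracking the extra summands that appear. First I would deal with the reference distance exactly as in Theorem \ref{mainResult1}: using the explicit form of the constants in Theorem \ref{AccCont}, I can choose $m_n$ so that $\max(\mathcal{C}^{(1)},\mathcal{C}^{(2)}) \in o(m_n)$, and then Markov's inequality applied to $\sum_{i=1}^M\|X_{T\wedge\tau_n}^{\rm ref}-X_{T\wedge\tau_n}^{(i)}\|^2$ gives $\mathds{P}[\tau_n^{\rm ref}\le T\wedge\tau_n]\to 0$ as $n\to\infty$.

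Next I would take \eqref{decompDiffESRF} in the ensemble-mean-squared norm and apply the Cauchy--Schwarz inequality, grouping the resulting terms. The term $f(X_{\nu(s)}^{(i),a})-f(X_s^{(i)})$ is handled by $\|f\|_{\rm Lip}$; the gain-difference factors $\hat K_{\nu_+(s)}-\tfrac12 K_s$ and $K_{\nu_+(s)}-\hat K_{\nu_+(s)}-\tfrac12 K_s$ are precisely two of the quantities $\Delta_s$ controlled by \eqref{diffGains} in Lemma \ref{estimatesKGain}, so after inserting \eqref{EstXFXA} for $\sum_i\|X_{\nu_+(s)}^{(i),f}-X_s^{(i)}\|^2$ they contribute the same polynomial-in-$n$ factors times $h$ as in Theorem \ref{mainResult1}; the factors $g(X_s^{\rm ref})-g(X_s^{(i)})$ and $g(X_s^{\rm ref})-\bar g_s$ are $\|g\|_{\rm Lip}$-Lipschitz in $\sum_i\|X_s^{\rm ref}-X_s^{(i)}\|$ and hence bounded by $m_n$ on $[0,T\wedge\tau_n^{\rm ref}]$; and the term $\hat K_{\nu_+(s)}(g(X_{\nu_+(s)}^{(i),f})-g(X_s^{(i)}))$ uses $\|\hat K_{\nu_+(s)}\|\le\tfrac12\kappa_K{\rm tr}(P_{\nu_+(s)}^f)$ together with \eqref{EstXFXA}. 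In contrast to \eqref{decompDiffEnKF}, only the single stochastic integral $\int_0^{\eta(t)}(K_{\nu_+(s)}-K_s)C^{\frac12}{\rm d}V_s$ appears, since the ESRF carries no per-particle observation noise, and it is estimated by the Burkholder--Davis--Gundy inequality together with \eqref{diffGains} exactly as in the proof of Theorem \ref{mainResult1}.

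The two genuinely new contributions are the drift term $(K_{\nu_+(s)}-\hat K_{\nu_+(s)})(\bar g_{\nu_+(s)}^f-\bar g_s)$ and the accumulated transform remainder $\sum_{k=1}^{\nu(t)}\mathcal R_k^{(i)}$. For the former I would use $\|K_{\nu_+(s)}-\hat K_{\nu_+(s)}\|\lesssim{\rm tr}(P_{\nu_+(s)}^f)$, which holds because both $\|K_{\nu_+(s)}\|$ and $\|\hat K_{\nu_+(s)}\|$ are $\le\kappa_K{\rm tr}(P_{\nu_+(s)}^f)$ by Lemma \ref{estimatesKGain}, together with $\|\bar g_{\nu_+(s)}^f-\bar g_s\|\le\|g\|_{\rm Lip}\tfrac1M\sum_i\|X_{\nu_+(s)}^{(i),f}-X_s^{(i)}\|$, which \eqref{EstXFXA} reduces to the quantity being estimated plus an $O(h)$ remainder bounded via Theorem \ref{WellposedEnKFs}. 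For the remainder, in the EAKF/ETKF case \eqref{Rkh}, the elementary bound $\|e^{-A}-{\rm Id}+A\|\le\tfrac12\|A\|^2$ for symmetric positive semidefinite $A$, and the finiteness of $\int_0^\infty t^{3/2}e^{-t}{\rm d}t$ give $\|R_k^h\|\lesssim h^2\,{\rm tr}(P_k^f)^{5/2}$, hence $\|R_k^h\|\lesssim h^2 n^{5/2}$ on $\{{\rm tr}(P_k^f)\le n\}$; summing the at most $\lceil T/h\rceil$ steps yields $\|\sum_{k=1}^{\nu(t)}\mathcal R_k^{(i)}\|^2\lesssim T^2 n^5 h^2$, of the required order. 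In the unperturbed case $\mathcal R_k^{(i)}=0$ and the term is absent.

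Finally I would control $\sum_i\|X_{\eta(t)}^{(i)}-X_t^{(i)}\|^2$ by the same estimate as in Theorem \ref{mainResult1}, now with the ETKBF drift from \eqref{ETKBF}, so that $g(X_s^{(i)})$ is replaced by $\tfrac12(g(X_s^{(i)})+\bar g_s)$, which is still $\|g\|_{\rm Lip}$-Lipschitz controlled and bounded on $[0,T\wedge\tau_n\wedge\tau_n^{\rm ref}]$ via Theorem \ref{WellposedEnKBF}. Collecting everything then produces an inequality of the form
\begin{equation*}
\begin{aligned}
&\mathds{E}\!\left[\sup_{t\in[0,T\wedge\tau_n\wedge\tau_n^h\wedge\tau_n^{\rm ref}]}\sum_{i=1}^M\left\|X_{\nu(t)}^{(i),a}-X_t^{(i)}\right\|^2\right]\\
&\lesssim \mathds{E}\!\left[\sum_{i=1}^M\left\|X_0^{(i),a}-X_0^{(i)}\right\|^2\right] + C^{(1)}\int_0^T\mathds{E}\!\left[\sup_{r\in[0,s\wedge\tau_n\wedge\tau_n^h\wedge\tau_n^{\rm ref}]}\sum_{i=1}^M\left\|X_{\nu(r)}^{(i),a}-X_r^{(i)}\right\|^2\right]{\rm d}s + hC^{(2)}
\end{aligned}
\end{equation*}
with $C^{(1)}=C^{(1)}(T,M,n,m_n)$ and $C^{(2)}=C^{(2)}(T,M,n,m_n,h)$ of the same polynomial type as in Theorem \ref{mainResult1}, after which a standard Gronwall argument together with the hypothesis $\mathds{E}[\sum_i\|X_0^{(i),a}-X_0^{(i)}\|^2]\lesssim h$ yields the claim. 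I expect the main obstacle to be the bookkeeping of the ETKBF-specific terms: above all the uniform-in-$h$ control of the accumulated transform remainders $\sum_k\mathcal R_k^{(i)}$, where one must verify that the $O(h^{-1})$ number of summands each of size $O(h^2)$ still collapses to order $h$, and the coupling in $(K_{\nu_+(s)}-\hat K_{\nu_+(s)})(\bar g_{\nu_+(s)}^f-\bar g_s)$ between the ensemble spread ${\rm tr}(P^f)$ and the forecast-versus-continuous difference, which must be absorbed via \eqref{EstXFXA} without spoiling the Gronwall structure.
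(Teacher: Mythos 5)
Your proposal follows essentially the same route as the paper: the same choice of $m_n$ via Theorem \ref{AccCont} and Markov's inequality, the same Cauchy--Schwarz treatment of \eqref{decompDiffESRF} with Lemma \ref{estimatesKGain} and \eqref{EstXFXA}, and the same second-order Taylor estimate of $R_k^h$ yielding $\left\|\mathcal{R}_k^{(i)}\right\|\lesssim h^2\,{\rm tr}\left(P_k^{f}\right)^{5/2}$ before a Gronwall argument closes the proof. The only cosmetic difference is that you invoke the bound $\left\|e^{-A}-{\rm Id}+A\right\|\leq\tfrac{1}{2}\|A\|^2$ directly where the paper iterates the identity $e^{-tP}-{\rm Id}=-\int_0^t e^{-sP}P\,{\rm d}s$; both give the same order in $h$ for the accumulated remainder, which is all that is needed.
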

\begin{proof}
Again choose $m_n$ as done in the proof of Theorem \ref{mainResult1}. Via the Cauchy-Schwarz inequality, the ensemble-mean squared norm of \eqref{decompDiffESRF} admits the following estimate:
\begin{equation*}
\begin{aligned}
&\sum_{i=1}^M \left\|X_{\nu(t)}^{(i),a} - X_t^{(i)}\right\|^2\\
&\lesssim \sum_{i=1}^M \left\|X_0^{(i),a} - X_0^{(i)}\right\|^2\\
&\quad + \eta(t) \int_0^{\eta(t)} \|f\|_{\text{Lip}}^2\sum_{i=1}^M\left\|X_{\nu(s)}^{(i),a} - X_s^{(i)}\right\|^2 \\
&\hspace{2.5cm}+ \left(\left\|\hat{K}_{\nu_{+}(s)}-\frac{1}{2}K_s\right\|^2+\left\|K_{\nu_{+}(s)} - \hat{K}_{\nu_{+}(s)}-\frac{1}{2}K_s\right\|^2\right)\\
&\hspace{3cm}\times\|g\|_{\text{Lip}}^2\sum_{i=1}^M \left\|X_s^{\text{ref}}-X_s^{(i)}\right\|^2\\
&\hspace{2.5cm} + \left(\left\|\hat{K}_{\nu_{+}(s)}\right\|^2+\left\|K_{\nu_{+}(s)}-\hat{K}_{\nu_{+}(s)}\right\|^2\right)\\
&\hspace{3cm}\times\|g\|_{\text{Lip}}^2\sum_{i=1}^M\left\|X_{\nu_{+}(s)}^{(i),f} - X_s^{(i)}\right\|^2{\rm d}s\\
&\quad + M \left\|\int_0^{\eta(t)}\left(K_{\nu_{+}(s)}-\frac{1}{M-1}E_s\mathcal{G}_s^TC^{-1}\right)C^{\frac{1}{2}}{\rm d}V_s\right\|^2\\
&\quad + \sum_{i=1}^M \left\|\sum_{k=1}^{\nu(t)} \mathcal{R}_k^{(i)}\right\|^2+ \sum_{i=1}^M \left\|X_{\eta(t)}^{(i)} - X_t^{(i)}\right\|^2.
\end{aligned}
\end{equation*}
Using the identity
\[ e^{-tP} - {\rm Id} = - \int_0^t e^{-sP}P{\rm d}s\]
for symmetric positive semidefinite matrix $P$, we estimate
\begin{equation*}
\left\|\mathcal{R}_k^{(i)}\right\|^2 \lesssim h^4\text{tr}\left(P_k^{f}\right)^5
\end{equation*}
which yields
\begin{equation*}
\sum_{i=1}^M \left\|\sum_{k=1}^{\nu(t)} \mathcal{R}_k^{(i)}\right\|^2 \lesssim M T h^3 \text{tr}\left(P_k^{f}\right)^5.
\end{equation*}
The rest of the proof is analogous to the proof of Theorem \ref{mainResult1}.
\end{proof}

\section{Discussion and Conclusion}
\noindent
The Ensemble Kalman-Bucy Filter and the Ensemble Transform Kalman-Bucy Filter form two popular continuous-time filtering algorithms proposed in the literature which prove to work as an effective tool in the property analysis for discrete-time ensemble-based Kalman-type filters. A crucial question therefore is whether both EnKBF and ETKBF are naturally related to their discrete-time counterparts in the sense that they can be derived via a continuous time limit analysis. In the general finite dimensional setting of nonlinear, unbounded signal and observations, this paper provides an affirmative answer in that we are able to rigorously show that both EnKBF and ETKBF are the results of taking the continuous time limit of the well-known filtering algorithms EnKF and ESRF, respectively. Herewith we close the current gap in the literature which provides only formal argumentations. Most notably, this paper provides a general limiting result in that we do not impose restrictive assumptions on the setting, or on the algorithmic structure as done in previous works of ours. Second of all, observe that the ETKBF forms the continuous time limit of the EAKF, ETKF and the unperturbed filter in the sense specified and as shown in Theorem \ref{mainResult2} which suggests that it forms the universal continuous time limit of the class of ESRF algorithms.\\
Observe that a core element in the analysis is to control the empirical covariance matrices. In our previous works we handled this in the following way: assuming boundedness of the observation operator $g$ in \cite{lange2019}, terms involving the empirical covariance matrix were of suitable lower order such that they were absorbed by the Gronwall argument; and by replacing the model noise by suitable deterministic forecast perturbations as in \cite{lange2019b} inspired by \cite{deWiljes2018}, we were even able to derive a priori upper bounds. In the setting considered in this paper, this is no longer the case hence we employ a suitable stopping time argument. Together with the derivation of the continuous time limit, this enabled us to show boundedness results for both the discrete-time and continuous-time algorithms which hold in probability and can be interpreted as well-posedness and accuracy results. Comparable results have appeared in the literature before (as discussed in the introduction) and we have been inspired by \cite{kelly2014} for the EnK(B)F, as well as \cite{deWiljes2018} for the ETKBF. Note that the results in  \cite{kelly2014} and \cite{deWiljes2018} are derived without stopping since both work in the regime of full observations, additionally assuming the observation covariance matrix to be diagonal. Additionally in \cite{deWiljes2018}, the authors replace the model noise by suitable perturbations fitting the Riccati equation of the covariance matrix hence enforcing the required boundedness.\\
Observe furthermore that our results hold only locally in time. Indeed, the bounding constants depend exponentially on the time horizon $T$, therefore explode for $T \rightarrow \infty$. Note that a similar behavior holds for the well-posedness results in \cite{kelly2014}. Since this is a consequence of the use of Gronwall arguments, it would be interesting to investigate whether alternative reasonings apply in this setting. Note, however, that in the partially observed case as we consider here, global-in-time results would necessitate further structural assumptions on the underlying setting including observability and controllability conditions. Possible strategies in this direction have been initiated in the continuous-time linearly observed case in \cite{delMoral2018} for the EnKBF with linear forecast drift, and in \cite{delMoral2017} for an extended EnKBF using concentration inequalities.

\bigskip 
\noindent 
{\bf Acknowledgements} The research of Theresa Lange has been partially funded by Deutsche Forschungsgemeinschaft (DFG) - SFB1294/1 - 318763901. The author would further like to thank the referees for their detailed and constructive comments.

\appendix
\section{Frequently used Kalman gain estimates}\label{appLemma}
\noindent
For convenience we repeat the statement of Lemma \ref{estimatesKGain} here:
\begin{lemma}\label{estimatesK}
The operator norms of the discrete-time and continuous-time Kalman gains respectively satisfy
\begin{align}
\left\|K_{\nu(t)}\right\| &\leq \kappa_K{\rm tr}\left(P_{\nu(t)}^{f}\right) \label{KDiscr}\\
\left\|\hat{K}_{\nu(t)}\right\| &\leq \frac{1}{2}\kappa_K{\rm tr}\left(P_{\nu(t)}^{f}\right)\label{HatKDiscr}\\
\left\|K_t\right\| &\leq \kappa_K{\rm tr}\left(P_t\right)\label{KCont}
\end{align}
where $\kappa_K := \|g\|_{\text{Lip}}\left\|C^{-1}\right\|$. Further let $\Delta_t$ denote 
\[ \left\|K_{\nu(t)} - K_t\right\|, \left\|\hat{K}_{\nu(t)} - \frac{1}{2}K_t\right\|, \text{ or } \left\|K_{\nu(t)}-\hat{K}_{\nu(t)}-\frac{1}{2}K_t\right\|,\]
then $\Delta_t$ satisfies the following estimate
\begin{equation}\label{DiffGains}
\Delta_t \leq \kappa_{\text{diff}}\left( h{\rm tr}\left(P_{\nu(t)}^{f}\right)^2 + \left({\rm tr}\left(P_{\nu(t)}^{f}\right)^{\frac{1}{2}}+{\rm tr}(P_t)^{\frac{1}{2}}\right)\left(\sum_{i=1}^M \left\|X_{\nu(t)}^{(i),f} - X_t^{(i)}\right\|^2\right)^{\frac{1}{2}}\right)
\end{equation}
for a constant $\kappa_{\text{diff}}$ different for each case of $\Delta_t$, but independent of $h$.
\end{lemma}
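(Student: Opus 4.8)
The plan is to exploit that every Kalman gain occurring in the statement admits a common factored form $\tfrac{1}{M-1}\,E\,\mathcal{G}^{T}\,\Psi$, with $E$ a centred ensemble matrix, $\mathcal{G}$ the centred matrix of its image under $g$, and $\Psi$ a matrix function of either $C$ (continuous time) or of $S_{k}:=C+\tfrac{h}{M-1}\mathcal{G}_{k}^{f}(\mathcal{G}_{k}^{f})^{T}$ (discrete time), chosen so that the discrete-time $\Psi$ collapses to the continuous-time one as $h\to0$: concretely $\Psi=C^{-1}$ for $K_{t}$ and (up to $\tfrac12$) for the EAKF/ETKF gain $\hat K_{k}$, while for the EnKF gain $\Psi(S_{k})=S_{k}^{-1}$, for the unperturbed (Whitaker) gain $\hat K_{k}=\tilde K_{k}$ one has $\Psi(S_{k})=S_{k}^{-1/2}(C^{1/2}+S_{k}^{1/2})^{-1}$, and for $K_{k}-\tilde K_{k}$ one has $\Psi(S_{k})=S_{k}^{-1}C^{1/2}(C^{1/2}+S_{k}^{1/2})^{-1}$, all reducing to $C^{-1}$, $\tfrac12C^{-1}$, $\tfrac12C^{-1}$ at $S_{k}=C$.

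For the norm bounds \eqref{KDiscr}--\eqref{KCont} I would use submultiplicativity of the spectral norm and $\|A\|\le\|A\|_{F}$ to get $\|E_{t}\|\le\sqrt{(M-1)\,\mathrm{tr}(P_{t})}$, while the fact that the empirical second moment is minimised at the empirical mean, combined with Lipschitz continuity of $g$, gives $\|\mathcal{G}_{t}\|\le\|\mathcal{G}_{t}\|_{F}\le\sqrt{M-1}\,\|g\|_{\mathrm{Lip}}\sqrt{\mathrm{tr}(P_{t})}$, and analogously for the forecast quantities. Since $S_{k}\succeq C$, operator monotonicity of $x\mapsto\sqrt{x}$ and $x\mapsto x^{-1}$ yields $\|S_{k}^{-1}\|\le\|C^{-1}\|$, $\|S_{k}^{-1/2}\|\le\|C^{-1/2}\|$ and $\|(C^{1/2}+S_{k}^{1/2})^{-1}\|\le\tfrac12\|C^{-1/2}\|$, hence $\|\Psi\|\le\|C^{-1}\|$ for the EnKF-type gain and $\|\Psi\|\le\tfrac12\|C^{-1}\|$ for the $\hat K$-gains; multiplying the three factor bounds and using $\|C^{-1/2}\|^{2}=\|C^{-1}\|$ reproduces exactly $\kappa_{K}\mathrm{tr}(P^{f})$, resp.\ $\tfrac12\kappa_{K}\mathrm{tr}(P^{f})$ and $\kappa_{K}\mathrm{tr}(P_{t})$.

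For \eqref{DiffGains} I would telescope each $\Delta_{t}$, writing with $k=\nu(t)$
\[
\Delta_{t}=\tfrac{1}{M-1}\Big\|E_{k}^{f}(\mathcal{G}_{k}^{f})^{T}\big(\Psi(S_{k})-\Psi(C)\big)+\big(E_{k}^{f}(\mathcal{G}_{k}^{f})^{T}-E_{t}\mathcal{G}_{t}^{T}\big)\Psi(C)\Big\|.
\]
The second summand I estimate by $\big(\|E_{k}^{f}\|\,\|\mathcal{G}_{k}^{f}-\mathcal{G}_{t}\|+\|E_{k}^{f}-E_{t}\|\,\|\mathcal{G}_{t}\|\big)\|\Psi(C)\|$; since $E_{k}^{f}-E_{t}$, resp.\ $\mathcal{G}_{k}^{f}-\mathcal{G}_{t}$, is the centred version of $[X_{\nu(t)}^{(i),f}-X_{t}^{(i)}]_{i}$, resp.\ $[g(X_{\nu(t)}^{(i),f})-g(X_{t}^{(i)})]_{i}$, the same minimisation/Lipschitz argument gives $\|E_{k}^{f}-E_{t}\|_{F}\le\big(\sum_{i}\|X_{\nu(t)}^{(i),f}-X_{t}^{(i)}\|^{2}\big)^{1/2}$ and $\|\mathcal{G}_{k}^{f}-\mathcal{G}_{t}\|_{F}\le\|g\|_{\mathrm{Lip}}\big(\sum_{i}\|X_{\nu(t)}^{(i),f}-X_{t}^{(i)}\|^{2}\big)^{1/2}$, which together with the factor bounds of the previous step yields the term $(\mathrm{tr}(P^{f}_{\nu(t)})^{1/2}+\mathrm{tr}(P_{t})^{1/2})\big(\sum_{i}\|X_{\nu(t)}^{(i),f}-X_{t}^{(i)}\|^{2}\big)^{1/2}$. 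For the first summand I use $S_{k}-C=\tfrac{h}{M-1}\mathcal{G}_{k}^{f}(\mathcal{G}_{k}^{f})^{T}$, so that $\|S_{k}-C\|\le h\,\|g\|_{\mathrm{Lip}}^{2}\,\mathrm{tr}(P_{\nu(t)}^{f})$, together with a perturbation estimate $\|\Psi(S_{k})-\Psi(C)\|\lesssim\|S_{k}-C\|$ and $\|E_{k}^{f}(\mathcal{G}_{k}^{f})^{T}\|\le(M-1)\|g\|_{\mathrm{Lip}}\,\mathrm{tr}(P_{\nu(t)}^{f})$, to get the $h\,\mathrm{tr}(P_{\nu(t)}^{f})^{2}$ term; collecting both and absorbing the numerical constants into one $\kappa_{\mathrm{diff}}$ depending only on $\|g\|_{\mathrm{Lip}}$, $\|C^{-1}\|$ and $M$ establishes \eqref{DiffGains}.

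The main obstacle is the perturbation bound $\|\Psi(S_{k})-\Psi(C)\|\lesssim\|S_{k}-C\|$ for the unperturbed filter, where $\Psi$ contains operator square roots. I would reduce it to $\|S_{k}^{1/2}-C^{1/2}\|\lesssim\|S_{k}-C\|$ and $\|S_{k}^{-1/2}-C^{-1/2}\|\lesssim\|S_{k}-C\|$ with constants depending only on $\|C^{-1}\|$; these hold because $S_{k}\succeq C\succ0$ confines both spectra to $[\lambda_{\min}(C),\infty)$ uniformly in $h$, so the square root is Lipschitz there (via the Sylvester identity $S_{k}^{1/2}X+XC^{1/2}=S_{k}-C$, or an integral representation of the matrix square root of the type used in Section \ref{algos}), and then telescope $\Psi(S_{k})-\Psi(C)$ through the intermediate resolvent factors using $S_{k}^{-1/2}-(C^{1/2}+S_{k}^{1/2})^{-1}=S_{k}^{-1/2}C^{1/2}(C^{1/2}+S_{k}^{1/2})^{-1}$ and $(C^{1/2}+S_{k}^{1/2})^{-1}-(2C^{1/2})^{-1}=(C^{1/2}+S_{k}^{1/2})^{-1}(C^{1/2}-S_{k}^{1/2})(2C^{1/2})^{-1}$, exactly as in the norm-bound step. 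For the EnKF gain, and for the combination $K_{\nu(t)}-\hat K_{\nu(t)}$ in the EAKF/ETKF case, the only perturbation needed is the elementary $S_{k}^{-1}-C^{-1}=S_{k}^{-1}(C-S_{k})C^{-1}$, while for $\hat K_{\nu(t)}-\tfrac12K_{t}$ in the EAKF/ETKF case no perturbation term occurs at all, since both gains then carry the same factor $C^{-1}$.
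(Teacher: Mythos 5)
Your proposal is correct and follows essentially the same route as the paper: the same factored form of the gains, the same telescoping of $\Delta_t$ into a $\Psi$-perturbation term (bounded via $S_k-C=\tfrac{h}{M-1}\mathcal{G}_k^f(\mathcal{G}_k^f)^T$, resolvent identities and the Lipschitz bound for the operator square root on $[\lambda_{\min}(C),\infty)$) plus an $E\mathcal{G}^T$-difference term handled by Lipschitz continuity of $g$. Your ``empirical second moment is minimised at the empirical mean'' device combined with Frobenius-norm factorisation is just an equivalent formulation of the paper's key identity $E\mathcal{G}^T=E\tilde{\mathcal{G}}^T$ with $\tilde{\mathcal{G}}$ centred at $g(\bar{x})$, and yields the same constants.
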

\begin{proof}
A key observation in this proof is that with the notation
\[ \tilde{\mathcal{G}}_{\cdot} := \left[g\left(X_{\cdot}^{(i)}\right) - g\left(\bar{x}_{\cdot}\right)\right]_{i=1}^M\]
it holds
\begin{equation}\label{ETildeG}
E_{\cdot}\mathcal{G}_{\cdot}^T = E_{\cdot}\tilde{\mathcal{G}}_{\cdot}^T.
\end{equation}
Consequently we may estimate
\begin{equation}\label{EG}
\begin{aligned}
\left\|E_{\nu(t)}^{f}\left(\mathcal{G}_{\nu(t)}^{f}\right)^T\right\| &= \left\|E_{\nu(t)}^{f}\left(\tilde{\mathcal{G}}_{\nu(t)}^{f}\right)^T\right\|\\
&= \left\|\sum_{i=1}^M \left(X_{\nu(t)}^{(i),f} - \bar{x}_{\nu(t)}^{f}\right)\left(g\left(X_{\nu(t)}^{(i),f}\right)- g\left(\bar{x}_{\nu(t)}^{f}\right)^T\right)\right\|\\
&\leq \sum_{i=1}^M \left\|X_{\nu(t)}^{(i),f}-\bar{x}_{\nu(t)}^{f}\right\|\left\|g\left(X_{\nu(t)}^{(i),f}\right)-g\left(\bar{x}_{\nu(t)}^{f}\right)\right\|\\
&\leq \|g\|_{\text{Lip}}\sum_{i=1}^M \left\|X_{\nu(t)}^{(i),f} - \bar{x}_{\nu(t)}^{f}\right\|^2 = (M-1)\|g\|_{\text{Lip}}{\rm tr}\left(P_{\nu(t)}^{f}\right)
\end{aligned}
\end{equation}
which yields \eqref{HatKDiscr} in case of the EAKF/ETKF, as well as \eqref{KDiscr} due to
\begin{equation*}
\left(C+\frac{h}{M-1}\mathcal{G}_{\nu(t)}^{f}\left(\mathcal{G}_{\nu(t)}^{f}\right)^T\right)^{-1} \leq C^{-1}.
\end{equation*}
Similar to \eqref{EG} it holds
\begin{equation*}
\left\|E_t\mathcal{G}_t^T\right\| \leq (M-1)\|g\|_{\text{Lip}}\text{tr}(P_t)
\end{equation*}
which yields \eqref{KCont}. Finally observe that since
\begin{equation*}
\left(C+\frac{h}{M-1}\mathcal{G}_{\nu(t)}^{f}\left(\mathcal{G}_{\nu(t)}^{f}\right)^T\right)^{\frac{1}{2}} \geq C^{\frac{1}{2}}
\end{equation*}
we may estimate
\[ \tilde{C}_{\nu(t)} := \left(C+\frac{h}{M-1}\mathcal{G}_{\nu(t)}^{f}\left(\mathcal{G}_{\nu(t)}^{f}\right)^T\right)^{-\frac{1}{2}}\left(C^{\frac{1}{2}}+\left(C+\frac{h}{M-1}\mathcal{G}_{\nu(t)}^{f}\left(\mathcal{G}_{\nu(t)}^{f}\right)^T\right)^{\frac{1}{2}}\right)^{-1}\]
by
\begin{equation*}
\left\|\tilde{C}_{\nu(t)}\right\| \leq \frac{1}{2}\left\|C^{-\frac{1}{2}}\right\|^2 = \frac{1}{2}\left\|C^{-1}\right\|
\end{equation*}
which gives \eqref{HatKDiscr} in case of the unperturbed filter by \cite{whitaker2002}.\\
On $\Delta_t := \left\|K_{\nu(t)} - K_t\right\|$:
\begin{align*}
&K_{\nu(t)} - K_t\notag\\
 &= \frac{1}{M-1}\left(E_{\nu(t)}^{f}\left(\mathcal{G}_{\nu(t)}^{f}\right)^T\left(C+\frac{h}{M-1}\mathcal{G}_{\nu(t)}^{f}\left(\mathcal{G}_{\nu(t)}^{f}\right)^T\right)^{-1} - E_t\mathcal{G}_t^TC^{-1}\right)\notag\\
&= \frac{1}{M-1}\left( E_{\nu(t)}^{f}\left(\mathcal{G}_{\nu(t)}^{f}\right)^T\left( \left( C+\frac{h}{M-1}\mathcal{G}_{\nu(t)}^{f}\left(\mathcal{G}_{\nu(t)}^{f}\right)^T\right)^{-1} - C^{-1}\right) \right.\notag\\
&\hspace{2cm}\left.+ \left(E_{\nu(t)}^{f}\left(\mathcal{G}_{\nu(t)}^{f}\right)^T - E_t\mathcal{G}_t^T\right)C^{-1}\right).
\end{align*}
For the first summand we observe that
\begin{align*}
&\left(C+\frac{h}{M-1}\mathcal{G}_{\nu(t)}^{f}\left(\mathcal{G}_{\nu(t)}^{f}\right)^T\right)^{-1} - C^{-1}\notag\\
&= C^{-\frac{1}{2}}\left( \left({\rm Id} + \frac{h}{M-1}C^{-\frac{1}{2}}\mathcal{G}_{\nu(t)}^{f}\left(\mathcal{G}_{\nu(t)}^{f}\right)^TC^{-\frac{1}{2}}\right)^{-1} - {\rm Id}\right)C^{-\frac{1}{2}}\\
&= C^{-\frac{1}{2}}\left( \sum_{n=1}^{\infty} \left(-\frac{h}{M-1}C^{-\frac{1}{2}}\mathcal{G}_{\nu(t)}^{f}\left(\mathcal{G}_{\nu(t)}^{f}\right)^TC^{-\frac{1}{2}}\right)^n\right)C^{-\frac{1}{2}}\\
&= -\frac{h}{M-1} C^{-1}\mathcal{G}_{\nu(t)}^{f} \left({\rm Id} + \frac{h}{M-1} \left(\mathcal{G}_{\nu(t)}^{f}\right)^TC^{-1}\mathcal{G}_{\nu(t)}^{f}\right)^{-1}\left(\mathcal{G}_{\nu(t)}^{f}\right)^TC^{-1}.
\end{align*}
Thus since
\[ \left({\rm Id} + \frac{h}{M-1}\left(\mathcal{G}_{\nu(t)}^{f}\right)^TC^{-1}\mathcal{G}_{\nu(t)}^{f}\right)^{-1} \leq {\rm Id}\]
in the sense of symmetric positive semidefinite matrices, we obtain
\begin{align*}
&\left\|\left(C+\frac{h}{M-1}\mathcal{G}_{\nu(t)}^{f}\left(\mathcal{G}_{\nu(t)}^{f}\right)^T\right)^{-1} - C^{-1}\right\| \notag\\
&\leq \frac{h}{M-1}\left\|C^{-1}\mathcal{G}_{\nu(t)}^{f}\left(\mathcal{G}_{\nu(t)}^{f}\right)^TC^{-1}\right\| \leq \frac{h}{M-1}\left\|C^{-1}\right\|^2\left\|\mathcal{G}_{\nu(t)}^{f}\left(\mathcal{G}_{\nu(t)}^{f}\right)^T\right\|
\end{align*}
where
\begin{align*}
\left\|\mathcal{G}_{\nu(t)}^{f}\left(\mathcal{G}_{\nu(t)}^{f}\right)^T\right\|
&\leq \sum_{i=1}^M \left\|g\left(X_{\nu(t)}^{(i),f}\right) - g\left(\bar{x}_{\nu(t)}^{f}\right)\right\|^2 + M\left\|\bar{g}_{\nu(t)}^{f} - g\left(\bar{x}_{\nu(t)}^{f}\right)\right\|^2\\
&\leq 2(M-1)\|g\|_{\text{Lip}}^2\text{tr}\left(P_{\nu(t)}^{f}\right).
\end{align*}
For the second summand note that with \eqref{ETildeG} it holds
\begin{equation*}
\begin{aligned}
&\frac{1}{M-1}\left\|E_{\nu(t)}^{f}\left(\mathcal{G}_{\nu(t)}^{f}\right)^T - E_t\mathcal{G}_t^T\right\|\\
&\leq \left(\frac{1}{M-1}\sum_{i=1}^M\left\|\left(X_{\nu(t)}^{(i),f} - \bar{x}_{\nu(t)}^{f}\right)-\left(X_t^{(i)}-\bar{x}_t\right)\right\|^2\right)^{\frac{1}{2}}\\
&\hspace{1cm}\times\left(\frac{1}{M-1}\sum_{i=1}^M\left\|g\left(X_{\nu(t)}^{(i),f}\right)-g\left(\bar{x}_{\nu(t)}^{f}\right)\right\|^2\right)^{\frac{1}{2}}\\
&\quad + \left(\frac{1}{M-1}\sum_{i=1}^M \left\|X_t^{(i)}-\bar{x}_t\right\|^2\right)^{\frac{1}{2}}\\
&\hspace{1cm}\times\left(\frac{1}{M-1}\sum_{i=1}^M\left\|\left(g\left(X_{\nu(t)}^{(i),f}\right)-g\left(\bar{x}_{\nu(t)}^{f}\right)\right)-\left(g\left(X_t^{(i)}\right)-g\left(\bar{x}_t\right)\right)\right\|^2\right)^{\frac{1}{2}}\\
&\leq \|g\|_{\text{Lip}}\left(\text{tr}\left(P_{\nu(t)}^{f}\right)^{\frac{1}{2}}+\text{tr}(P_t)^{\frac{1}{2}}\right)\left(\frac{1}{M-1}\sum_{i=1}^M \left\|X_{\nu(t)}^{(i),f}-X_t^{(i)}\right\|^2\right)^{\frac{1}{2}}
\end{aligned}
\end{equation*}
which in total yields the claim.\\
On $\Delta_t := \left\|\hat{K}_{\nu(t)} - \frac{1}{2}K_t\right\|$: for the EAKF/ETKF the claim immediately follows from
\begin{equation*}
\hat{K}_{\nu(t)} - \frac{1}{2}K_t = \frac{1}{2}\frac{1}{M-1}\left(E_{\nu(t)}^{f}\left(\mathcal{G}_{\nu(t)}^{f}\right)^T - E_t\mathcal{G}_t^T\right)C^{-1}
\end{equation*}
and the above analyzes. For the unperturbed filter consider the following decomposition
\begin{equation*}
\begin{aligned}
&\tilde{C}_{\nu(t)} - \frac{1}{2}C^{-1}\\
&= \tilde{C}_{\nu(t)}\left(C^{\frac{1}{2}}\left(C^{\frac{1}{2}}-\left(C+\frac{h}{M-1}\mathcal{G}_{\nu(t)}^{f}\left(\mathcal{G}_{\nu(t)}^{f}\right)^T\right)^{\frac{1}{2}}\right) - \frac{h}{M-1}\mathcal{G}_{\nu(t)}^{f}\left(\mathcal{G}_{\nu(t)}^{f}\right)^T\right)\frac{1}{2}C^{-1}.
\end{aligned}
\end{equation*}
Thus using
\begin{equation*}
\left\|C^{\frac{1}{2}}-\left(C+\frac{h}{M-1}\mathcal{G}_{\nu(t)}^{f}\left(\mathcal{G}_{\nu(t)}^{f}\right)^T\right)^{\frac{1}{2}}\right\| \lesssim \left\|\frac{h}{M-1}\mathcal{G}_{\nu(t)}^{f}\left(\mathcal{G}_{\nu(t)}^{f}\right)^T\right\|
\end{equation*}
together with the above estimates yields the claim.\\
The estimate for $\Delta_t := \left\|K_{\nu(t)}-\hat{K}_{\nu(t)}-\frac{1}{2}K_t\right\|$ follows from similar decompositions and estimates as presented above.
\end{proof}

\section{Well-posedness results}
\subsection{Proof of Theorem \ref{WellposedEnKBF}}\label{appWellposedEnKBF}
Recall from Section \ref{ContWellPosed} that it holds
\begin{equation*}
\mathds{E}\left[\sup_{t \in [0,T \wedge \xi]}\sqrt{\text{tr}(P_t)}\right] \lesssim e^{\|f\|_{\text{Lip}}T}\text{tr}(Q).
\end{equation*}
Hence by Markov's inequality we deduce
\begin{equation*}
\mathds{P}\left[\tau_n \leq T\right] \leq \frac{1}{\sqrt{n}}\mathds{E}\left[\sqrt{\text{tr}(P_{\nu\left(\tau_n \wedge T\right)})}\right] \longrightarrow 0, n \rightarrow \infty.
\end{equation*}
Using \eqref{YXref}, we obtain by It\^{o}'s formula
\begin{align*}
{\rm d}\frac{1}{2}\sum_{i=1}^M \left\|X_t^{(i)}\right\|^2&= \sum_{i=1}^M \left \langle X_t^{(i)}, f\left(X_t^{(i)}\right) + K_t\left(g\left(X_t^{\text{ref}}\right) - g\left(X_t^{(i)}\right)\right)\right \rangle{\rm d}t \\
&\hspace{1cm} + \left \langle X_t^{(i)}, Q^{\frac{1}{2}}{\rm d}W_t^{(i)} + K_tC^{\frac{1}{2}}\left({\rm d}V_t + {\rm d}V_t^{(i)}\right) \right \rangle \\
&\hspace{1cm}+ \left(\frac{1}{2}\text{tr}(Q) + \text{tr}\left(K_tCK_t^T\right)\right){\rm d}t.
\end{align*}
Thus by global Lipschitz continuity of $f$ and $g$ it holds
\begin{equation*}
{\rm d}\sum_{i=1}^M \left\|X_t^{(i)}\right\|^2 \leq \left((1 + 2C_f \sum_{i=1}^M\left\|X_t^{(i)}\right\|^2 + \mathcal{R}(t)\right){\rm d}t + \mathcal{N}_t
\end{equation*}
where together with Lemma \ref{estimatesK} it holds
\begin{align*}
\mathcal{R}(t) &:= 2M\left(C_f + \frac{1}{2}\text{tr}(Q) + \left(\kappa_K \text{tr}(P_t)\right)^2\text{tr}(C)\right) \\
&\quad+ \left(\kappa_K \text{tr}(P_t)\right)^2\|g\|_{\text{Lip}}^2\sum_{i=1}^M \left\|X_t^{\text{ref}}- X_t^{(i)}\right\|^2,\\
\mathcal{N}_t &:= \sum_{i=1}^M \left \langle X_t^{(i)}, Q^{\frac{1}{2}}{\rm d}W_t^{(i)}\right \rangle + \left\langle X_t^{(i)}, K_tC^{\frac{1}{2}}{\rm d}V_t\right \rangle + \left\langle X_t^{(i)}, K_tC^{\frac{1}{2}}{\rm d}V_t^{(i)}\right \rangle.
\end{align*}
For each $t \in [0, T \wedge \tau_n]$ it holds $\mathcal{R}(t) \leq \mathcal{R}^{n}$ where
\begin{equation*}
\mathcal{R}^{n} := M\left(C_f + \frac{1}{2}\text{tr}(Q) + (\kappa_K n)^2\text{tr}(C)\right)+ (\kappa_K n)^2 \sup_{t \in [0, T \wedge \tau_n]} \sum_{i=1}^M \left\|X_s^{\text{ref}}-X_s^{(i)}\right\|^2
\end{equation*}
which yields
\begin{align*}
\sup_{t \in [0, T \wedge \tau_n]} \sum_{i=1}^M \left\|X_t^{(i)}\right\|^2 &\leq \sum_{i=1}^M \left\|X_0^{(i)}\right\|^2 + \int_0^{T \wedge \tau_n} (1+2C_f) \sup_{r \in [0,s \wedge \tau_n]}\sum_{i=1}^M \left\|X_r^{(i)}\right\|^2 {\rm d}s\notag\\
&\hspace{0.5cm} + (T \wedge \tau_n)\mathcal{R}^{(n)} + \sup_{t \in [0, T\wedge \tau_n]} \int_0^t {\rm d} \mathcal{N}_s.
\end{align*}
By the Burkholder-Davis-Gundy inequality we obtain
\begin{align*}
&\mathds{E}\left[\sup_{t \in [0, T \wedge \tau_n]} \int_0^t {\rm d}\mathcal{N}_s\right]\\
&\leq C_{BDG}\left( \sum_{i=1}^M \mathds{E}\left[\left( \int_0^{T \wedge \tau_n}\|Q\|\left\|X_s^{(i)}\right\|^2{\rm d}s\right)^{\frac{1}{2}}\right]\right.\\
&\hspace{2.5cm}\left. + 2\mathds{E}\left[\left(\int_0^{T \wedge \tau_n} \|C\|\left(\kappa_K \text{tr}(P_s)\right)^2\left\|X_s^{(i)}\right\|^2{\rm d}s\right)^{\frac{1}{2}}\right]\right)\\
&\leq \frac{C_{BDG}}{2}\left(3M + \left(\|Q\| + (\kappa_K n)^2\|C\|\right)\int_0^T \mathds{E}\left[\sum_{i=1}^M \left\|X_s^{(i)}\right\|^2\right]{\rm d}s\right).
\end{align*}
Using Theorem \ref{AccCont} and applying a Gronwall argument then yields the claim.

\subsection{Proof of Theorem \ref{WellposedEnKFs}}\label{appWellposedEnKFs}
\noindent
In order to keep the paper concise, we will only show the claims for the EnKF. The proof for the ESRF algorithms is analogous.\\
\noindent
On the claim on the stopping time $\tau_n^{h}$: similar to the continuous-time case, if we can show
\begin{equation*}
\mathds{E}\left[\sup_{t \in [0,T \wedge \xi]}\left(\text{tr}\left(P_{\nu(t)}^{f}\right)\right)^{p}\right] \leq \mathcal{C}
\end{equation*}
up to explosion time $\xi$ for some constant $\mathcal{C}>0$ and some $p>0$, the claim follows by Markov's inequality. The trace of the empirical covariance matrix is determined by the following recursions: for the forecast covariance matrix
\begin{align*}
&\text{tr}\left(P_k^{f}\right)\\
&= \frac{1}{M-1} \sum_{i=1}^M \left\|X_{k-1}^{(i),a} - \bar{x}_{k-1}^{a}\right\|^2\\
&\hspace{2.3cm} + 2h\left \langle X_{k-1}^{(i),a} - \bar{x}_{k-1}^{a}, f\left(X_{k-1}^{(i),a}\right) - \bar{f}_{k-1}^{a}\right \rangle + h^2\left\|f\left(X_{k-1}^{(i),a}\right) - \bar{f}_{k-1}^{a}\right\|^2\\
&\hspace{2.3cm} + 2\left \langle X_{k-1}^{(i),a} - \bar{x}_{k-1}^{a} + h\left(f\left(X_{k-1}^{(i),a}\right) - \bar{f}_{k-1}^{a}\right), Q^{\frac{1}{2}}\left(W_k^{(i)} - \bar{w}_k\right) \right \rangle \\
&\hspace{2.3cm}+ \left\|Q^{\frac{1}{2}}\left(W_k^{(i)} - \bar{w}_k\right)\right\|^2
\end{align*}
(note that this holds true for all algorithms presented in Section \ref{algos}) and the analysis covariance matrix
\begin{align*}
&\text{tr}\left(P_k^{a}\right)\\
&= \frac{1}{M-1} \sum_{i=1}^M \left\|X_k^{(i),f} - \bar{x}_k^{f}\right\|^2\\
&\hspace{2.3cm} -2h\left \langle X_k^{(i),f} - \bar{x}_k^{f}, K_k\left(g\left(X_k^{(i),f}\right) - \bar{g}_k^{f}\right)\right \rangle\\
&\hspace{2.3cm} + h^2\left\|K_k\left(g\left(X_k^{(i),f}\right) - \bar{g}_k^{f}\right)\right\|^2\\
&\hspace{2.3cm} + 2\left\langle X_k^{(i),f} - \bar{x}_k^{f} - hK_k\left(g\left(X_k^{(i),f}\right) - \bar{g}_k^{f}\right), K_kC^{\frac{1}{2}}\left(V_k^{(i)} - \bar{v}_k\right)\right \rangle\\
&\hspace{2.3cm} + \left\|K_kC^{\frac{1}{2}}\left(V_k^{(i)} - \bar{v}_k\right)\right\|^2.
\end{align*}
Observe the following identities: we can rewrite
\begin{equation*}
\sum_{i=1}^M \left \langle X_k^{(i),f} - \bar{x}_k^{f}, K_k\left(g\left(X_k^{(i),f}\right) - \bar{g}_k^{f}\right)\right \rangle = \text{tr}\left(E_k^{f}\left(\mathcal{G}_k^{f}\right)^TK_k^T\right)
\end{equation*}
as well as
\begin{align*}
&\frac{h^2}{M-1}\sum_{i=1}^M \left\|K_k\left(g\left(X_k^{(i),f}\right) - \bar{g}_k^{f}\right)\right\|^2= \frac{h^2}{M-1} \text{tr}\left(K_k\mathcal{G}_k^{f}\left(\mathcal{G}_k^{f}\right)^TK_k^T\right)\\
&= h\left( \text{tr}\left(K_k\left(C + \frac{h}{M-1}\mathcal{G}_k^{f}\left(\mathcal{G}_k^{f}\right)^T\right)K_k^T\right) - \text{tr}\left(K_kCK_k^T\right)\right)\\
&= h\left(\frac{1}{M-1}\text{tr}\left(K_k\mathcal{G}_k^{f}\left(E_k^{f}\right)^T\right) - \text{tr}\left(K_kCK_k^T\right)\right).
\end{align*}
Hence by Lipschitz-continuity of $f$ we obtain the following recursive inequalities:
\begin{equation*}
\begin{aligned}
\text{tr}\left(P_k^{f}\right) &\leq \left(1+h\mathcal{C}(h)\right)\text{tr}\left(P_{k-1}^{a}\right) + N_k^{f} + \text{tr}\left(Q^{\frac{1}{2}}\mathds{W}_kQ^{\frac{1}{2}}\right),\\
\text{tr}\left(P_k^{a}\right) &\leq \text{tr}\left(P_k^{f}\right) + N_k^{a} + \text{tr}\left(K_kC^{\frac{1}{2}}\left(\mathds{V}_k-h{\rm Id}\right)C^{\frac{1}{2}}K_k^T\right)
\end{aligned}
\end{equation*}
for a constant $\mathcal{C}(h) = \mathcal{C}(h,f)>0$ and where
\begin{align*}
N_k^{f} &:= \frac{2}{M-1} \sum_{i=1}^M \left\langle X_{k-1}^{(i),a} - \bar{x}_{k-1}^{a} + h\left(f\left(X_{k-1}^{(i),a}\right)-\bar{f}_{k-1}^{a}\right), Q^{\frac{1}{2}}\left(W_k^{(i)}-\bar{w}_k\right)\right \rangle,\\
\mathbb{W}_k &:= \frac{1}{M-1}\sum_{i=1}^M \left(W_k^{(i)} - \bar{w}_k\right)\left(W_k^{(i)}-\bar{w}_k\right)^T
\end{align*}
as well as
\begin{align*}
N_k^{a} &:= \frac{2}{M-1} \sum_{i=1}^M \left \langle X_k^{(i),f} - \bar{x}_k^{f} - hK_k\left(g\left(X_k^{(i),f}\right) - \bar{g}_k^{f}\right), K_kC^{\frac{1}{2}}\left(V_k^{(i)}-\bar{v}_k\right)\right\rangle,\\
\mathds{V}_k &:= \frac{1}{M-1}\sum_{i=1}^M \left(V_k^{(i)} - \bar{v}_k\right)\left(V_k^{(i)}-\bar{v}_k\right)^T.
\end{align*}
Now consider the discrete-time process $Z = \left(Z_k\right)_{0 \leq k \leq L}$ defined by
\begin{equation*}
Z_{2k} := \text{tr}\left(P_k^{a}\right),\qquad Z_{2k+1} := \text{tr}\left(P_{k+1}^{f}\right)
\end{equation*}
and the filtration $\left(\mathcal{F}_k\right)_{0\leq k \leq L}$ given by
\begin{equation*}
\begin{aligned}
\mathcal{F}_0 &:= \sigma\left(X_0^{(i),a}\right)\vee \mathcal{Y}_T,\\
\mathcal{F}_{2k} &:= \mathcal{F}_{2k-1} \vee \sigma\left(V_k^{(i)}, i=1,...,M\right),\\
\mathcal{F}_{2k+1} &:= \mathcal{F}_{2k}\vee \sigma\left(W_{k+1}^{(i)}, i=1,...,M\right)
\end{aligned}
\end{equation*}
(where $\vee$ denotes the operation of forming the smallest $\sigma$-algebra containing all listed $\sigma$-algebras), then $Z$ is an $\left(\mathcal{F}_k\right)$-adapted process and satisfies
\begin{equation*}
Z_k \leq F_k + \mathcal{N}_k + \sum_{l=0}^{k-1}G_lZ_l
\end{equation*}
where
\begin{equation*}
\begin{aligned}
F_k &:= \left(1+h\mathcal{C}(h)\right)\text{tr}\left(P_0^{a}\right) + \sum_{l=1}^{\lfloor \frac{k+1}{2} \rfloor} \text{tr}\left(Q^{\frac{1}{2}}\mathds{W}_lQ^{\frac{1}{2}}\right),\\
G_k &:= h \mathcal{C}(h)
\end{aligned}
\end{equation*}
and a process $\mathcal{N}$ defined by
\begin{equation*}
\begin{aligned}
\mathcal{N}_{2k} &:= \sum_{l=1}^k N_l^{f} + N_l^{a} + \text{tr}\left(K_lC^{\frac{1}{2}}\left(\mathds{V}_l-h{\rm Id}\right)C^{\frac{1}{2}}K_l^T\right),\\
\mathcal{N}_{2k+1} &:= \sum_{l=1}^{k+1} N_l^{f} + \sum_{l=1}^k \left(1+h\mathcal{C}(h)\right)\left(N_l^{a} + \text{tr}\left(K_lC^{\frac{1}{2}}\left(\mathds{V}_l-h{\rm Id}\right)C^{\frac{1}{2}}K_l^T\right)\right)
\end{aligned}
\end{equation*}
\noindent
Observe that since
\begin{equation*}
V_k^{(i)} - \bar{v}_k \sim \mathcal{N}\left(0, \frac{M-1}{M}h{\rm Id}\right)
\end{equation*}
and $\left(V_k^{(i)}\right)_{i=1,...,M}$ are chosen independently of the forecast ensemble at time $k$, the conditional expectation, with respect to the $\sigma$-algebra generated by the forecast ensemble, of $\mathds{V}_k$ equals $h{\rm Id}$. Hence $\mathcal{N}$ is an $\left(\mathcal{F}_k\right)$-martingale and applying the discrete stochastic Gronwall lemma (cf. \cite{kruse2018}) yields for any $p \in (0,1)$
\begin{equation*}
\mathds{E}\left[\sup_{t \in [0,T]\wedge \xi}Z_{\nu(t)}^{p}\right] \leq \mathcal{C}
\end{equation*}
where (under smallness assumption on $h$) $\mathcal{C} = \mathcal{C}(p,T,M,Q)>0$ is independent of $h$.\\
For the second claim, using \eqref{YXref} we estimate
\begin{equation*}
\begin{aligned}
\sum_{i=1}^M\left\|X_{\nu_{+}(s)}^{(i),a}\right\|^2 &\leq \sum_{i=1}^M \left\|X_{\nu_{+}(s)}^{(i),f}\right\|^2\\
&\hspace{1.25cm} +h\left(\left\|X_{\nu_{+}(s)}^{(i),f}\right\|^2 +\left\|K_{\nu_{+}(s)}\right\|^2\|g\|_{\text{Lip}}^2\left\|X_{\eta_{+}(s)}^{\text{ref}}-X_{\nu_{+}(s)}^{(i),f}\right\|^2\right)\\
&\hspace{1.25cm} +h\left\|X_{\nu_{+}(s)}^{(i),f}\right\|^2 + \left\|K_{\nu_{+}(s)}\right\|^2\|g\|_{\text{Lip}}^2\int_{\eta(s)}^{\eta_{+}(s)}\left\|X_r^{\text{ref}}-X_{\eta_{+}(s)}^{\text{ref}}\right\|^2{\rm d}r\\
&\hspace{1.25cm} + 2\left \langle X_{\nu_{+}(s)}^{(i),f},K_{\nu_{+}(s)}C^{\frac{1}{2}}\int_{\eta(s)}^{\eta_{+}(s)}{\rm d}\left(V_r + V_r^{(i)}\right)\right \rangle\\
&\hspace{1.25cm} + 4h^2\left\|K_{\nu_{+}(s)}\right\|^2\|g\|_{\text{Lip}}^2\left\|X_{\eta_{+}(s)}^{\text{ref}}-X_{\nu_{+}(s)}^{(i),f}\right\|^2\\
&\hspace{1.25cm} + 4h\left\|K_{\nu_{+}(s)}\right\|^2\|g\|_{\text{Lip}}^2\int_{\eta(s)}^{\eta_{+}(s)}\left\|X_r^{\text{ref}}-X_{\eta_{+}(s)}^{\text{ref}}\right\|^2{\rm d}r\\
&\hspace{1.25cm} + 4\left\|\int_{\eta(s)}^{\eta_{+}(s)}K_{\nu_{+}(s)}C^{\frac{1}{2}}{\rm d}V_s\right\|^2 + 4\left\|K_{\nu_{+}(s)}C^{\frac{1}{2}}V_{\nu_{+}(s)}^{(i)}\right\|^2.
\end{aligned}
\end{equation*}
With the crude estimate
\[\left\|X_{\eta_{+}(s)}^{\text{ref}}-X_{\nu_{+}(s)}^{(i),f}\right\|^2 \leq 2\left(\left\|X_{\eta_{+}(s)}^{\text{ref}}\right\|^2+\left\|X_{\nu_{+}(s)}^{(i),f}\right\|^2\right)\]
as well as with
\begin{equation*}
\begin{aligned}
\left\|X_{\nu_{+}(s)}^{(i),f}\right\|^2 &\leq \left\|X_{\nu(s)}^{(i),a}\right\|^2 + 2hC_f\left(1+\left\|X_{\nu(s)}^{(i),a}\right\|^2\right)+2h^2\tilde{C}_f\left(1+\left\|X_{\nu(s)}^{(i),a}\right\|^2\right)\notag\\
&\quad + 2\left \langle X_{\nu(s)}^{(i),a},Q^{\frac{1}{2}}W_{\nu_{+}(s)}^{(i)}\right \rangle + 2\left\|Q^{\frac{1}{2}}W_{\nu_{+}(s)}^{(i)}\right\|^2
\end{aligned}
\end{equation*}
we get an estimate of the form
\begin{equation*}
\sum_{i=1}^M \left\|X_{\nu_{+}(s)}^{(i),a}\right\|^2 \leq \left(1+h\mathcal{C}\left(\left\|K_{\nu_{+}(s)}\right\|^2,h\right)\right)\left(\sum_{i=1}^M \left\|X_{\nu(s)}^{(i),a}\right\|^2\right) + R_{\nu_{+}(s)} + N_{\nu_{+}(s)}
\end{equation*}
where
\begin{equation*}
\begin{aligned}
1+h\mathcal{C}\left(\left\|K_{\nu_{+}(s)}\right\|^2,h\right) &:= \left(1+2hC_f+2h^2\tilde{C}_f\right)\\
&\quad \times\left(1+2h+2h(1+4h)\left\|K_{\nu_{+}(s)}\right\|^2\|g\|_{\text{Lip}}^2\right),
\end{aligned}
\end{equation*}
\begin{equation*}
\begin{aligned}
R_{\nu_{+}(s)} &:= \sum_{i=1}^M 2hC_f +2h^2\tilde{C}_f\\
&\hspace{1.5cm} +2h(1+4h)\left\|K_{\nu_{+}(s)}\right\|^2\|g\|_{\text{Lip}}^2\left\|X_{\eta_{+}(s)}^{\text{ref}}\right\|^2\\
&\hspace{1.5cm} + (1+4h)\left\|K_{\nu_{+}(s)}\right\|^2\|g\|_{\text{Lip}}^2\int_{\eta(s)}^{\eta_{+}(s)}\left\|X_r^{\text{ref}}-X_{\eta_{+}(s)}^{\text{ref}}\right\|^2{\rm d}r\\
&\hspace{1.5cm} + 2\left\|Q^{\frac{1}{2}}W_{\nu_{+}(s)}^{(i)}\right\|^2\notag\\
&\hspace{1.5cm} +4\left\|\int_{\eta(s)}^{\eta_{+}(s)}K_{\nu_{+}(s)}C^{\frac{1}{2}}{\rm d}V_r\right\|^2 + 4\left\|\int_{\eta(s)}^{\eta_{+}(s)}K_{\nu_{+}(s)}C^{\frac{1}{2}}{\rm d}V_r^{(i)}\right\|^2,
\end{aligned}
\end{equation*}
\begin{align*}
N_{\nu_{+}(s)} &:=2\sum_{i=1}^M \left \langle X_{\nu(s)}^{(i),a}, Q^{\frac{1}{2}}W_{\nu_{+}(s)}^{(i)}\right \rangle \\
&\hspace{1.5cm}+ \left \langle X_{\nu_{+}(s)}^{(i),f}, K_{\nu_{+}(s)}C^{\frac{1}{2}}\int_{\eta(s)}^{\eta_{+}(s)}K_{\nu_{x}(s)}C^{\frac{1}{2}}{\rm d}\left(V_r+V_r^{(i)}\right)\right \rangle.
\end{align*}
Iterating this gives
\begin{equation*}
\begin{aligned}
\sum_{i=1}^M \left\|X_{\nu(s)}^{(i),a}\right\|^2 &\leq \left(1+h\mathcal{C}\left(\left\|K_{\nu(s)}\right\|^2,h\right)\right)^{\nu(s)}\left(\sum_{i=1}^M\left\|X_0^{(i),a}\right\|^2\right)\\
&\quad +\sum_{k=1}^{\nu(s)}\left(1+h\mathcal{C}\left(\left\|K_{\nu_{+}(s)}\right\|^2,h\right)\right)^{\nu(s)-k}\left(R_k+N_k\right).
\end{aligned}
\end{equation*}
Note that for $s \leq \tau_n^{h}$ it holds $\left\|K_{\nu_(s)}\right\|^2 \lesssim n^2$ by Appendix \ref{appLemma}, thus for small enough $h$ we obtain
\[1+h\mathcal{C}\left(\left\|K_{\nu(s)}\right\|^2,h\right) \leq e^{T\mathcal{C}\left(n^2,h\right)}.\]
Therefore
\begin{equation*}
\begin{aligned}
&\mathds{E}\left[\int_0^{T\wedge \tau_n^{h}} \sum_{i=1}^M \left\|X_{\nu(s)}^{(i),a}\right\|^2{\rm d}s\right] \\
&\leq e^{T\mathcal{C}\left(n^2,h\right)}\mathds{E}\left[\int_0^{T\wedge\tau_n^{h}} \sum_{i=1}^M \left\|X_0^{(i),a}\right\|^2 + \sum_{k=1}^{\nu(s)}(R_k+N_k){\rm d}s\right]\\
&\leq Te^{T\mathcal{C}\left(n^2,h\right)}\mathds{E}\left[\sum_{i=1}^M\left\|X_0^{(i),a}\right\|^2\right]\\
&\quad + e^{T\mathcal{C}\left(n^2,h\right)}\left(\mathds{E}\left[\int_0^{T\wedge \tau_n^{h}}\sum_{k=1}^{\nu(s)}R_k{\rm d}s\right] + \mathds{E}\left[\int_0^{T\wedge \tau_n^{h}}\sum_{k=1}^{\nu(s)}N_k{\rm d}s\right]\right).
\end{aligned}
\end{equation*}
\noindent
First of all observe that
\begin{equation*}
\mathds{E}\left[\int_0^{T\wedge \tau_n^{h}}\sum_{k=1}^{\nu(s)}N_k{\rm d}s\right] \leq \int_0^T\sum_{k=1}^{\nu(s)}\mathds{E}\left[N_k\right]{\rm d}s = 0
\end{equation*}
since $N$ is a martingale. For the other remainder we estimate
\begin{equation*}
\begin{aligned}
&\mathds{E}\left[\int_0^{T\wedge \tau_n^{h}}\sum_{k=1}^{\nu(s)}R_k{\rm d}s\right]\\
&\lesssim \mathds{E}\left[\int_0^{T\wedge\tau_n^{h}} \sum_{k=1}^{\nu(s)}\sum_{i=1}^M 2hC_f + 2h^2\tilde{C}_f  + 2\left\|Q^{\frac{1}{2}}W_k^{(i)}\right\|^2\right.\\
&\hspace{3.5cm}\left. + 2h(1+4h)n^2\|g\|_{\text{Lip}}^2\left(\int_{t_{k-1}}^{t_k}\left\|X_r^{\text{ref}}-X_{t_k}^{\text{ref}}\right\|^2{\rm d}r\right) {\rm d}s\right]\\
&\quad + \mathds{E}\left[\int_0^{T\wedge\tau_n^{h}}\sum_{k=1}^{\nu(s)}M\left\|\int_{t_{k-1}}^{t_k}K_kC^{\frac{1}{2}}{\rm d}V_r\right\|^2{\rm d}s\right] \\
&\quad+ \mathds{E}\left[\int_0^{T\wedge\tau_n^{h}} \sum_{k=1}^{\nu(s)}\sum_{i=1}^M \left\|\int_{t_{k-1}}^{t_k}K_kC^{\frac{1}{2}}{\rm d}V_r^{(i)}\right\|^2{\rm d}s\right].
\end{aligned}
\end{equation*}
It holds by the Burkholder-Davis-Gundy inequality that
\begin{equation*}
\begin{aligned}
&\mathds{E}\left[\int_0^{T\wedge\tau_n^{h}}\sum_{k=1}^{\nu(s)}M\left\|\int_{t_{k-1}}^{t_k}K_kC^{\frac{1}{2}}{\rm d}V_r\right\|^2{\rm d}s\right]\\
&\leq C_{BDG}MTL\mathds{E}\left[\sup_{t \leq T\wedge \tau_n^{h}}\int_{\eta(t)}^{\eta_{+}(t)} \text{tr}\left(K_{\nu_{+}(t)}CK_{\nu_{+}(t)}^T\right){\rm d}r\right] \\
&\lesssim  C_{BDG}MT^2n^2\text{tr}(C).
\end{aligned}
\end{equation*}
Note that by assuming that
\begin{equation*}
\sup_{t \in [0,T]} \mathds{E}\left[\left\|X_t^{\text{ref}}\right\|^2\right] < \infty
\end{equation*}
one can show that
\begin{equation*}
\mathds{E}\left[\left\|X_t^{\text{ref}} - X_s^{\text{ref}}\right\|^2\right] \leq 2h^2C_f\left(1 + \sup_{r \in [0,T]}\mathds{E}\left[\left\|X_r^{\text{ref}}\right\|^2\right]\right) + 2\left\|Q^{\frac{1}{2}}\right\|^2h.
\end{equation*}
With this we obtain in total that
\[\mathds{E}\left[\int_0^{T\wedge \tau_n^{h}}\sum_{k=1}^{\nu(s)}R_k{\rm d}s\right]\]
is bounded, which concludes the proof.

\section{Accuracy results}

\subsection{Proof of Theorem \ref{AccCont}}\label{appAccCont}
The procedure of the proof is closely related to a comparable result in \cite{deWiljes2018}: let
\[ e_t := \frac{1}{2}\sum_{i=1}^M \left\|X_t^{\text{ref}} - X_t^{(i)}\right\|^2\]
denote the mean-squared approximation error between $X^{\text{ref}}$ and $X^{(i)}$. Using \eqref{YXref} together with It\^{o}'s formula, $e$ satisfies the evolution equation
\begin{equation*}
{\rm d}e_t = \mathcal{E}_t {\rm d}t + {\rm d}\mathcal{M}_t
\end{equation*}
where in the case of the EnKBF we obtain
\begin{align*}
\mathcal{E}_t &:= \sum_{i=1}^M \left \langle X_t^{\text{ref}} - X_t^{(i)}, f\left(X_t^{\text{ref}}\right) - f\left(X_t^{(i)}\right)\right \rangle \notag\\
&\hspace{1.5cm}-  \left \langle X_t^{\text{ref}} - X_t^{(i)}, K_t\left(g\left(X_t^{\text{ref}}\right)-g\left(X_t^{(i)}\right)\right)\right \rangle \notag\\
&\hspace{1.5cm} + \left(\text{tr}(Q) + \text{tr}\left(K_tCK_t^T\right)\right),\\
{\rm d}\mathcal{M}_t&:= \sum_{i=1}^M \left\langle X_t^{\text{ref}} - X_t^{(i)}, Q^{\frac{1}{2}}{\rm d}\left(W_t - W_t^{(i)}\right)\right \rangle\notag\\
&\hspace{1.5cm} - \left \langle X_t^{\text{ref}} - X_t^{(i)}, K_tC^{\frac{1}{2}}\left({\rm d}V_t + {\rm d}V_t^{(i)}\right)\right \rangle
\end{align*}
and in case of the ETKBF
\begin{align*}
\mathcal{E}_t &:= \sum_{i=1}^M \left \langle X_t^{\text{ref}} - X_t^{(i)}, f\left(X_t^{\text{ref}}\right) - f\left(X_t^{(i)}\right)\right \rangle \notag\\
&\hspace{1.5cm}-  \left \langle X_t^{\text{ref}} - X_t^{(i)}, K_t\left(g\left(X_t^{\text{ref}}\right)-\frac{1}{2}\left(g\left(X_t^{(i)}\right)+\bar{g}_t\right)\right)\right \rangle \notag\\
&\hspace{1.5cm} + \left(\text{tr}(Q) + \frac{1}{2}\text{tr}\left(K_tCK_t^T\right)\right),\\
{\rm d}\mathcal{M}_t&:= \sum_{i=1}^M \left\langle X_t^{\text{ref}} - X_t^{(i)}, Q^{\frac{1}{2}}{\rm d}\left(W_t - W_t^{(i)}\right)\right \rangle\notag\\
&\hspace{1.5cm} - \left \langle X_t^{\text{ref}} - X_t^{(i)}, K_tC^{\frac{1}{2}}{\rm d}V_t\right \rangle.
\end{align*}
Using Lipschitz-continuity of $f$ and $g$ we obtain the estimate
\begin{equation*}
\mathcal{E}_t \leq 2\left( |f|_{+} + \|K_t\|(Lg)_{+}\right)e_t + \text{tr}(Q) + \|K_t\|^2\left\|C^{\frac{1}{2}}\right\|_F^2
\end{equation*}
for the EnKBF, and
\begin{equation*}
\mathcal{E}_t \leq \left( 1+ 2|f|_{+} + 2\|K_t\|(Lg)_{+}\right)e_t + \text{tr}(Q) + \|K_t\|^2\left\|C^{\frac{1}{2}}\right\|_F^2
\end{equation*}
for the ETKBF. On $[0, T \wedge \tau_n]$ using the estimate on Kalman gain from Appendix \ref{appLemma} yields in both cases
\begin{equation*}
\mathcal{E}_t \leq \alpha_n e_t + \beta_n,
\end{equation*}
thus using a Gronwall argument we obtain the estimate
\begin{equation*}
\begin{aligned}
&\mathds{E}\left[\sup_{t \in [0, T \wedge \tau_n]} \sum_{i=1}^M \left\|X_t^{\text{ref}} - X_t^{(i)}\right\|^2\right]\\
&\leq 2e^{\alpha_n T}\mathds{E}\left[\sum_{i=1}^M \left\|X_0^{\text{ref}}-X_0^{(i)}\right\|^2\right] + \frac{2\beta_n}{\alpha_n}\left(e^{\alpha_n T} - 1\right) + 2\mathds{E}\left[\sup_{t \in [0, T \wedge \tau_n]} \int_0^t e^{\alpha_n(t-s)} {\rm d}\mathcal{M}_s\right].
\end{aligned}
\end{equation*}
Using the Burkholder-Davis-Gundy inequality, the last summand can be estimated by
\begin{equation*}
\begin{aligned}
&\mathds{E}\left[\sup_{t \in [0, T \wedge \tau_n]} \int_0^t e^{\alpha_n(t-s)}{\rm d}\mathcal{M}_s\right] \leq C_{BDG} \mathds{E}\left[\left( \int_0^{T\wedge \tau_n} e^{2\alpha_n(T-s)}{\rm d}\langle \mathcal{M}\rangle_s\right)^{\frac{1}{2}}\right]\\
&\leq \frac{C_{BDG}}{2}\left(1 + \mathds{E}\left[ \int_0^{T\wedge \tau_n} e^{2\alpha_n(T-s)}{\rm d}\langle \mathcal{M}\rangle_s\right]\right)\\
&\leq \frac{C_{BDG}}{2}\left( 1+ \int_0^{T} B(n)e^{2\alpha_n(T-s)}\mathds{E}\left[\sup_{r \in [0,s\wedge \tau_n]}\sum_{i=1}^M\left\|X_r^{\text{ref}} - X_r^{(i)}\right\|^2\right]{\rm d}s\right)
\end{aligned}
\end{equation*}
where
\[ B(n) := (1+M)\left(\left\|Q\right\|+ \left(\kappa_K n^2\right)^2\|C\|\right)\]
in case of the EnKBF and
\[B(n) := (1+M)\|Q\|+M\left(\kappa_K n^2\right)^2\|C\|\]
in case of the ETKBF. Using a Gronwall argument then yields
\begin{equation}\label{RefContDetail}
\begin{aligned}
&\mathds{E}\left[\sup_{t \in [0, T\wedge \tau_n]} \sum_{i=1}^M \left\|X_t^{\text{ref}} - X_t^{(i)}\right\|^2\right]\\
&\leq \text{exp}\left(\frac{C_{BDG}B(n)}{2\alpha_n}\left(e^{2\alpha_nT}-1\right)\right)\\
&\hspace{0.5cm}\times \left( 2e^{\alpha_nT}\mathds{E}\left[\sum_{i=1}^M\left\|X_0^{\text{ref}} - X_0^{(i)}\right\|^2\right] + 2\frac{\beta_n}{\alpha_n}\left(e^{\alpha_n T}-1\right) + C_{BDG}\right).
\end{aligned}
\end{equation}

\subsection{Proof of Theorem \ref{AccDiscr}}\label{appAccDiscr}
Again for the sake of conciseness we only present the proof for the EnKF, whereas the proof for the ESRF algorithms is analogous and follows with the help of estimates in Appendix \ref{appLemma}. Using \eqref{YXref} we decompose
\begin{equation*}
\begin{aligned}
X_t^{\text{ref}}-X_{\nu(t)}^{(i),a} &= X_{\eta(t)}^{\text{ref}}-X_{\nu(t)}^{(i),a} + X_t^{\text{ref}}-X_{\eta(t)}^{\text{ref}}\\
&= X_0^{\text{ref}}-X_0^{(i),a}\\
&\quad + \int_0^{\eta(t)}f\left(X_s^{\text{ref}}\right)-f\left(X_{\nu(s)}^{(i),a}\right) - K_{\nu_{+}(s)}\left(g\left(X_s^{\text{ref}}\right)-g\left(X_{\nu_{+}(s)}^{(i),f}\right)\right){\rm d}s\\
&\quad - \int_0^{\eta(t)}K_{\nu_{+}(s)}C^{\frac{1}{2}}{\rm d}\left(V_s+V_s^{(i)}\right)\\
&\quad +Q^{\frac{1}{2}}\left(W_{\eta(t)}-W_{\eta(t)}^{(i)}\right)\notag\\
&\quad + X_t^{\text{ref}}-X_{\eta(t)}^{\text{ref}}
\end{aligned}
\end{equation*}
thus
\begin{equation*}
\begin{aligned}
&\sum_{i=1}^M \left\|X_t^{\text{ref}}-X_{\nu(t)}^{(i),a}\right\|^2\\ &\lesssim \sum_{i=1}^M \left\|X_0^{\text{ref}}-X_0^{(i),a}\right\|^2\\
&\quad + \eta(t) \int_0^{\eta(t)} \|f\|_{\text{Lip}}^2\sum_{i=1}^M\left\|X_s^{\text{ref}}-X_{\nu(s)}^{(i),a}\right\|^2 \\
&\hspace{3cm}+ \left\|K_{\nu_{+}(s)}\right\|^2\|g\|_{\text{Lip}}^2\sum_{i=1}^M \left\|X_s^{\text{ref}}-X_{\nu_{+}(s)}^{(i),f}\right\|^2{\rm d}s\\
&\quad + M\left\|\int_0^{\eta(t)} K_{\nu_{+}(s)}C^{\frac{1}{2}}{\rm d}V_s\right\|^2 + \sum_{i=1}^M\left\|\int_0^{\eta(t)}K_{\nu_{+}(s)}C^{\frac{1}{2}}{\rm d}V_s^{(i)}\right\|^2\\
&\quad + \sum_{i=1}^M\left\|Q^{\frac{1}{2}}\left(W_{\eta(t)}-W_{\eta(t)}^{(i)}\right)\right\|^2 + M\left\|X_t^{\text{ref}}-X_{\eta(t)}^{\text{ref}}\right\|^2.
\end{aligned}
\end{equation*}
Use the estimates
\begin{align*}
&\sum_{i=1}^M\left\|X_s^{\text{ref}}-X_{\nu_{+}(s)}^{(i),f}\right\|^2 \\
&\lesssim \sum_{i=1}^M \left\|X_s^{\text{ref}}-X_{\nu(s)}^{(i),a}\right\|^2 + h^2\tilde{C}_f\left(M+\sum_{i=1}^M\left\|X_{\nu(s)}^{(i),a}\right\|^2\right)+\sum_{i=1}^M\left\|Q^{\frac{1}{2}}W_{\nu_{+}(s)}^{(i)}\right\|^2
\end{align*}
as well as
\begin{equation*}
\left\|X_t^{\text{ref}}-X_{\eta(t)}^{\text{ref}}\right\|^2 \leq 2(t-\eta(t))\int_{\eta(t)}^t\tilde{C}_f\left(1+\left\|X_r^{\text{ref}}\right\|^2\right){\rm d}r + 2\|Q\|\left\|W_t-W_{\eta(t)}\right\|^2,
\end{equation*}
then
\begin{equation*}
\begin{aligned}
&\mathds{E}\left[\sup_{t \in [0, T \wedge \tau_n^{h}]}\sum_{i=1}^M\left\|X_t^{\text{ref}}-X_{\nu(t)}^{(i),a}\right\|^2\right]\\
&\lesssim \mathds{E}\left[\sum_{i=1}^M \left\|X_0^{\text{ref}}-X_0^{(i),a}\right\|^2\right]\\
&\quad + T\mathds{E}\left[\int_0^{T \wedge \tau_n^{h}} \left(\|f\|_{\text{Lip}}^2+n^2\|g\|_{\text{Lip}}^2\right)\sup_{r \in[0, s\wedge \tau_n^{h}]}\sum_{i=1}^M\left\|X_r^{\text{ref}}-X_{\nu(r)}^{(i),a}\right\|^2\right.\\
&\hspace{2.5cm}\left. + n^2\|g\|_{\text{Lip}}^2\left(h^2\tilde{C}_f\left(M+\sum_{i=1}^M\left\|X_{\nu(s)}^{(i),a}\right\|^2\right)+\sum_{i=1}^M\left\|Q^{\frac{1}{2}}W_{\nu_{+}(s)}^{(i)}\right\|^2\right){\rm d}s\right]\\
&\quad + M\mathds{E}\left[\sup_{t\in [0, T \wedge \tau_n^{h}]}\left\|\int_0^{\eta(t)} K_{\nu_{+}(s)}C^{\frac{1}{2}}{\rm d}V_s\right\|^2\right] \\
&\quad+ \sum_{i=1}^M \mathds{E}\left[\sup_{t \in [0, T \wedge \tau_n^{h}]} \left\|\int_0^{\eta(t)} K_{\nu_{+}(s)}C^{\frac{1}{2}}{\rm d}V_s^{(i)}\right\|^2\right]\\
&\quad + \sum_{i=1}^M \mathds{E}\left[\sup_{t \in [0, T \wedge \tau_n^{h}]}\left\|Q^{\frac{1}{2}}\left(W_{\eta(t)}-W_{\eta(t)}^{(i)}\right)\right\|^2\right]\\
&\quad + 2M\mathds{E}\left[ \sup_{t \in [0, T \wedge \tau_n^{h}]} \left(h\int_{\eta(t)}^t\tilde{C}_f\left(1+\left\|X_r^{\text{ref}}\right\|^2\right){\rm d}r + \|Q\|\left\|W_t-W_{\eta(t)}\right\|^2 \right)\right].
\end{aligned}
\end{equation*}
Using Theorem \ref{WellposedEnKFs} as well as that
\begin{equation*}
\mathds{E}\left[\sup_{t\in [0, T \wedge \tau_n^{h}]}\left\|\int_0^{\eta(t)} K_{\nu_{+}(s)}C^{\frac{1}{2}}{\rm d}V_s\right\|^2\right] \lesssim Tn^2\text{tr}(C),
\end{equation*}
we obtain an estimate of the form
\begin{equation*}
\begin{aligned}
&\mathds{E}\left[\sup_{t \in [0, T \wedge \tau_n^{h}]}\sum_{i=1}^M\left\|X_t^{\text{ref}}-X_{\nu(t)}^{(i),a}\right\|^2\right] \\
&\lesssim \mathds{E}\left[\sum_{i=1}^M\left\|X_0^{\text{ref}}-X_0^{(i),a}\right\|^2\right] + L(n)\int_0^T\mathds{E}\left[\sup_{r \in[0, s \wedge \tau_n^{h}]}\sum_{i=1}^M\left\|X_r^{\text{ref}}-X_{\nu(r)}^{(i),a}\right\|^2\right]{\rm d}s \\
&\quad+ R(n,h)
\end{aligned}
\end{equation*}
which by a Gronwall argument yields the claim.
\end{document}